\def\p{\partial}
\def\R{\mathbb{R}}
\def\Z{\mathbb{Z}}
\def\l{\lambda}
\def\cE{{\mathcal E}}
\def\cJ{{\mathcal J}}
\def\cR{{\mathcal R}}
\def\cS{{\mathcal S}}
\newtheorem{thm}{Theorem}
\newtheorem{prop}{Proposition}[section]
\newtheorem{lemma}[prop]{Lemma}
\newtheorem{defn}[prop]{Definition}
\newtheorem{conj}[prop]{Conjecture}
\begin{document}
\title{Biharmonic almost complex structures}
\author{Weiyong He}
\email{whe@uoregon.edu}
\address{University of Oregon, Eugene, OR, 97403}

\begin{abstract}We introduce the notion of \emph{biharmonic almost complex structure} on a compact almost Hermitian manifold and we study its regularity and existence in dimension four. First we show that there always exist smooth energy-minimizing biharmonic almost complex structures for any almost Hermitian structure on a compact almost complex four manifold, and all energy-minimizers form a compact set. Then we study the existence problem when the homotopy class of an almost complex structure is specified. We obtain existence of energy-minimizing biharmonic almost complex structures which depends on the topology of $M^4$. When $M$ is simply-connected and non-spin, then for each homotopy class which is uniquely determined by its first Chern class, there exists an energy-minimizing biharmonic almost complex structure. When $M$ is simply-connected and spin, for each first Chern class, there are exactly two homotopy classes corresponding to the first Chern class. Given a homotopy class $[\tau]$ of an almost complex structure, there exists a canonical operation on the homotopy classes $p$ satisfying $p^2=\text{id}$ such that  $p([\tau])$ and $[\tau]$ have the same first Chern class. We prove that  there exists an energy-minimizing biharmonic almost complex structure in (at least) one of the two  homotopy classes, $[\tau]$ and $p([\tau])$.  In general if $M$ is not necessarily simply-connected, we prove that there exists an energy-minimizing biharmonic almost complex structure in (at least) one of the two  homotopy classes $[\tau]$ and $p([\tau])$.

The study of biharmonic almost complex structures should have many applications, in particular for the smooth topology of the underlying almost complex four manifold. We briefly discuss an approach by considering the moduli space of biharmonic almost complex structures and propose a conjecture. 
\end{abstract}

\maketitle

\section{Introduction}

We continue to study almost complex structures on a compact Riemannian (almost Hermitian) manifold $(M, g)$ of dimension $m=4$.   In \cite{He17}, we have studied the energy-minimizing harmonic almost complex structures with respect to the energy functional
\[
\cE_1(J)=\int_M |\nabla_g J|^2dv. 
\]
The main results we have obtained in \cite{He17} are existence and regularity of generalized $W^{1, 2}$ almost complex structures, parallel to the theory of harmonic maps. For example, we showed that there is always an energy-minimizing harmonic almost complex structure and it is smooth away from (Hausdorff) codimension three singularities. In general the regularity result cannot be improved and these objects are not necessarily smooth.  
In this paper we want to study the energy-minimizing and critical almost complex structures with respect to  the energy functional
\begin{equation}
\cE_2(J)=\int_M |\Delta_g J|^2 dv,\; J\in \cJ_g,
\end{equation}
where $\cJ_g$ is the space of almost complex structures which are compatible with $g$.
We call these objects \emph{biharmonic almost complex structures}
since these objects are tensor-valued version of biharmonic maps.

The notion of biharmonic map might first appear in Eells-Lemaire \cite{EL}, where the authors introduced the notion of $k$-harmonic map.  When $k=2$, two-harmonic maps were studied in 1980s by Jiang \cite{JGY}. We refer the readers to the survey paper \cite{MO} for the history of the subject. 
Extrinsic biharmonic maps were introduced and studied by Chang, Wang and Yang in \cite{CWY}, where the authors proved the smooth regularity of $W^{2, 2}$ biharmonic maps from a disc in $\R^4$ to the spheres (partial regularity was also obtained when the disc is in dimension five and up). In general Wang \cite{Wang} proved that weakly $W^{2, 2}$ biharmonic map from a disc in $\R^4$ to any target is smooth. 

Biharmonic almost complex structure is a tensor-valued version of biharmonic maps. 
The structural equations of the biharmonic maps and the biharmonic almost complex structures share many similarities, which play an important role in the proof of the smooth regularity. On the other hand, 
there are substantial differences between biharmonic almost complex structures and biharmonic maps. The prominent difference might be as follows. 
An almost complex structure $J$ with zero energy ($\cE_2(J)=0$) defines a K\"ahler structure $\nabla J=0$. Hence an energy-minimizing almost Hermitian structure $(g, J)$ can be viewed as a generalization of a K\"ahler structure in some sense. The existence of a K\"ahler structure gives a strong restriction of the manifold itself.  As a comparison, zero energy harmonic/biharmonic maps are trivial.

We shall see that  energy-minimizing biharmonic almost complex structures exist in abundance. We try to study these objects to learn the manifold itself, just as the existence of K\"ahler structure gives a lot information of the manifold itself. This is our main motivation to consider energy-minimizing almost complex structures. 
The first main result of the paper is the following,
\begin{thm}\label{main1}A $W^{2, 2}$-biharmonic almost complex structure on $(M^4, g)$ is smooth. Moreover, on any compact almost Hermitian manifold $(M^4, g)$, there always exist minimizers of the energy functional $\cE_2(J)$ which are smooth biharmonic almost complex structures.  All such energy-minimizers form a compact set. 
\end{thm}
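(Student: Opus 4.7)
The plan is to establish the three assertions in order: smooth regularity of $W^{2,2}$ biharmonic almost complex structures, existence of minimizers by the direct method, and compactness of the set of all minimizers.

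For the regularity statement, I would first derive the Euler--Lagrange equation on the constraint manifold $\cJ_g = \{J \in \mathrm{End}(TM): J^2 = -I,\ g(J\cdot,J\cdot) = g\}$. Admissible variations at $J$ are endomorphisms $V$ anticommuting with $J$ and satisfying a compatibility/antisymmetry condition with respect to $gJ$; projecting $\Delta^2 J$ onto $T_J\cJ_g$ yields an equation of schematic form
\[
\Delta^2 J = N(J,\nabla J,\nabla^2 J) + (\text{curvature terms}),
\]
where $N$ is polynomial in $J$ and quadratic in the second derivatives, with natural antisymmetric structure forced by $VJ + JV = 0$. The goal is to show a $W^{2,2}$ weak solution is $C^\infty$. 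Following the strategy of Chang--Wang--Yang \cite{CWY} and Wang \cite{Wang} for biharmonic maps from four-manifolds, I would prove an $\varepsilon$-regularity theorem: there exists $\varepsilon_0 > 0$ such that if $\int_{B_{2r}}(|\nabla^2 J|^2 + r^{-2}|\nabla J|^2)\,dv < \varepsilon_0$, then $J \in C^\infty(B_r)$. The proof hinges on the embedding $W^{2,2}\hookrightarrow \mathrm{VMO}$ in dimension four, so the critical pieces of $N$ can be placed into Hardy-type spaces and controlled by duality with BMO, yielding an initial H\"older improvement; bootstrap on the resulting fourth-order elliptic system then gives $C^\infty$. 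Because the $W^{2,2}$ energy of a fixed $J$ is absolutely continuous, the smallness hypothesis holds at every point at sufficiently small scale, so smoothness is global.

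For existence I would apply the direct method. Since $|J|$ is pointwise bounded on $\cJ_g$ (the compatibility forces $|J|_{\mathrm{op}}=1$), any $\cE_2$-minimizing sequence $\{J_k\}$ is uniformly bounded in $W^{2,2}$. Pass to a subsequence converging weakly in $W^{2,2}$ and, by Rellich, strongly in $W^{1,p}$ for every $p<4$ and almost everywhere. The a.e.\ limit $J_\infty$ still satisfies $J_\infty^2 = -I$ and $g(J_\infty\cdot,J_\infty\cdot) = g$, hence $J_\infty \in \cJ_g$; weak lower semicontinuity of $\int|\Delta J|^2$ gives $\cE_2(J_\infty)\le \liminf_k \cE_2(J_k)$, so $J_\infty$ is a minimizer, which the regularity statement then upgrades to a smooth biharmonic almost complex structure.

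Compactness of the full set of energy-minimizers follows from the same ingredients: any sequence of minimizers is bounded in $W^{2,2}$, so extracts a subsequence converging weakly in $W^{2,2}$ and a.e.\ to another minimizer; combining the Euler--Lagrange equation with $\varepsilon$-regularity at small scales and elliptic bootstrap upgrades this to $C^k$ convergence for every $k$, giving compactness in the smooth topology. The technical heart is the $\varepsilon$-regularity, since $\cE_2$ is critical in dimension four and a naive estimate of $N(J,\nabla J,\nabla^2 J)$ loses; extracting the correct antisymmetric/divergence structure of $N$, forced by the constraints $J^2=-I$ and $g$-compatibility, is the main obstacle, paralleling but differing in detail from the biharmonic map case. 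Once that is in hand, the direct method, the passage of the algebraic constraints to the weak limit, and the bootstrap to smooth convergence are all standard.
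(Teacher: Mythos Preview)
Your plan matches the paper's approach: existence by the direct method, regularity via a Chang--Wang--Yang style $\varepsilon$-regularity argument (which the paper in fact defers entirely to the companion \cite{HJ19}, only indicating that the method is inspired by \cite{CWY}), and compactness built on these. Your regularity sketch is thus more detailed than what appears here.

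One genuine methodological difference concerns compactness. The paper does not invoke $\varepsilon$-regularity or bootstrap for this; it uses a simpler Hilbert-space argument. If $\{J_k\}$ is any sequence of minimizers, the weak $W^{2,2}$ limit $J_0$ is again a minimizer by lower semicontinuity, so $\|\Delta J_k\|_{L^2}^2 = \min = \|\Delta J_0\|_{L^2}^2$; weak convergence together with norm convergence in $L^2$ forces strong convergence of $\Delta J_k$, and Rellich already gives strong $W^{1,2}$ convergence, hence strong $W^{2,2}$ convergence. This yields $W^{2,2}$-compactness with no PDE input beyond the functional itself. Your route via uniform higher-order estimates is heavier but buys the stronger conclusion of compactness in $C^\infty$, which the paper only alludes to elsewhere.

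A minor imprecision: the nonlinearity on the right of the Euler--Lagrange equation is not purely second order; it contains third-order terms of the schematic form $J\,\nabla J\,\nabla\Delta J$. The paper writes this explicitly as $\Delta^2 J = Q(J,\nabla J,\nabla^2 J,\nabla^3 J)$ and then recasts it in a weak (divergence-type) formulation suitable for $W^{2,2}$ test functions, which is the form one actually uses in the regularity argument.
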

The existence of energy-minimizing almost complex structures is a standard practice 
of calculus of variations. The proof of the regularity result is rather delicate and our method was partly inspired by the theory of biharmonic maps, in particular the approach used in Chang-Wang-Yang \cite{CWY}.  Certainly there are substantial differences, partly due to the nature of tensor-valued functions. 
We will present the proof for a much more general system in a companion paper, joint with R. Q. Jiang \cite{HJ19} and the smooth regularity in Theorem \ref{main1} will follow as a special case. 

An important question in the theory of harmonic (and biharmonic) maps is the existence in a given homotopy class.  
Our second main result concerns with the  existence of energy-minimizing almost complex structures in a fixed homotopy class. 
The topology of $M$ will play some important role in the following. Let $A$ denote the set of homotopy classes of almost complex structures on $M$, so the first Chern class gives a map
 \[
 c_1: A\rightarrow H^2(M, \mathbb{Z}).
 \]
 Donaldson \cite[Section 6]{Donaldson90} defined a map $p: A\rightarrow A$ with $p^2=\text{id}$ and $c_1\circ p=c_1$ using the fact that $\pi_4(S^2)=\mathbb{Z}_2$. 
First we assume that $M$ is simply-connected.  If $M$ is non-spin, then $\sigma=p(\sigma)$ (see \cite{P1}) and the first Chern class $c_1$ determines $\sigma$. If $M$ is spin, then the first Chern class $c_1$ corresponds exactly to two homotopy classes of $\sigma$ and $p(\sigma)$.  

\begin{thm}\label{main2} 
Let $(M, g)$ be a compact, simply-connected almost Hermitian four manifold. If $M$ is non-spin, every homotopy class of almost complex structures contains an energy-minimizing biharmonic almost complex structure. If $M$ is spin, at least one of the homotopy classes $\sigma$ and $p(\sigma)$ contains
  an energy minimizing biharmonic almost complex structure.  \end{thm}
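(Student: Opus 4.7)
The plan is to apply the direct method of the calculus of variations to a minimizing sequence in a fixed homotopy class and then analyze a possible jump of the homotopy class in the weak $W^{2,2}$-limit via a bubbling argument modeled on the biharmonic-map case. Fix $\sigma\in A$ and set $E_\sigma=\inf\{\cE_2(J):J\in\sigma\}$. Since compatible almost complex structures satisfy $|J|_g^2=2$ pointwise, a minimizing sequence $\{J_k\}\subset\sigma$ has a uniform $W^{2,2}$ bound (the $L^2$-bound on $\nabla J_k$ following from the $L^2$-bound on $\Delta_g J_k$ by integration by parts on the closed manifold $M$). After passing to a subsequence, $J_k\rightharpoonup J_\infty$ weakly in $W^{2,2}$ and strongly in $W^{1,p}$ for every finite $p$, and a further a.e.\ convergent subsequence forces $J_\infty$ to satisfy the pointwise algebraic constraints defining $\cJ_g$. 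Lower semicontinuity of $\cE_2$ yields $\cE_2(J_\infty)\leq E_\sigma$; a standard variational argument then shows that $J_\infty$ is $W^{2,2}$-biharmonic, so Theorem \ref{main1} makes it smooth.

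Next I would run the bubbling analysis characteristic of the borderline Sobolev exponent $W^{2,2}$ in dimension four. Using the $\varepsilon$-regularity underlying Theorem \ref{main1}, the concentration set
\[
\Sigma=\Bigl\{x\in M:\liminf_{r\to 0}\liminf_{k\to\infty}\int_{B_r(x)}|\Delta_g J_k|^2\,dv\geq\varepsilon_0\Bigr\}
\]
is finite, and $J_k\to J_\infty$ smoothly on $M\setminus\Sigma$. At each $x_i\in\Sigma$ I would perform the usual rescaling $\widetilde J_k(y):=J_k(\exp_{x_i}(r_{k,i}y))$ with suitable scales $r_{k,i}\to 0$, extracting a nonconstant finite-energy biharmonic map $\widetilde J:\R^4\to S^2$ in the limit; the $S^2$ target is the fiber of the twistor bundle $Z\to M$ over $x_i$, which trivializes the rescaled bundle as $\R^4\times S^2$ since the base metric tends to the flat Euclidean one. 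Removability of isolated $W^{2,2}$-biharmonic point singularities in dimension four extends $\widetilde J$ to a smooth biharmonic map $S^4\to S^2$, and iterating the neck analysis in the spirit of Chang-Wang-Yang \cite{CWY} and Wang \cite{Wang} produces finitely many such bubbles with the energy identity
\[
\lim_{k\to\infty}\cE_2(J_k)=\cE_2(J_\infty)+\sum_{\text{bubbles}}\cE_2(\widetilde J).
\]

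The topological upshot is that the classes $\sigma$ and $[J_\infty]$ differ precisely by the sum, in $\pi_4(S^2)=\mathbb{Z}_2$, of the bubble homotopy classes --- the very mechanism by which Donaldson's involution $p:A\to A$ is defined. Hence $[J_\infty]\in\{\sigma,p(\sigma)\}$. If $M$ is non-spin, then $p(\sigma)=\sigma$ by \cite{P1}, forcing $[J_\infty]=\sigma$, and $\cE_2(J_\infty)\leq E_\sigma$ together with $J_\infty\in\sigma$ gives $\cE_2(J_\infty)=E_\sigma$: a minimizer in $\sigma$. If $M$ is spin, assume without loss of generality that $E_\sigma\leq E_{p(\sigma)}$. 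When $[J_\infty]=\sigma$ one again obtains a minimizer in $\sigma$; when $[J_\infty]=p(\sigma)$ the chain
\[
E_{p(\sigma)}\leq\cE_2(J_\infty)\leq E_\sigma\leq E_{p(\sigma)}
\]
forces equality throughout, and $J_\infty$ is a minimizer in $p(\sigma)$. In every case at least one of $\sigma,p(\sigma)$ contains an energy-minimizing biharmonic almost complex structure.

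The main obstacle is the bubbling step, which requires (i) a sharp $\varepsilon$-regularity theorem for $W^{2,2}$-biharmonic almost complex structures in dimension four, (ii) a clean no-neck energy identity, and (iii) the identification, in local trivializations of the twistor bundle, of each bubble with a finite-energy biharmonic map into the fiber $S^2$ whose homotopy class lies in $\pi_4(S^2)$. Items (i) and (iii) should be available from the regularity machinery behind Theorem \ref{main1} and the companion work \cite{HJ19}, adapting the methods of \cite{CWY,Wang} to the tensor-valued setting; the no-neck estimate is the technical heart of the argument and is the step I expect to require the most delicate work.
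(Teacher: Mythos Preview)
Your overall architecture --- take a minimizing sequence in $\sigma$, pass to a weak $W^{2,2}$ limit, show the limit is biharmonic and hence smooth, then argue $[J_\infty]\in\{\sigma,p(\sigma)\}$ and finish with the spin/non-spin dichotomy --- is correct, and your endgame argument in the spin case matches the paper's exactly. But the route you take to $[J_\infty]\in\{\sigma,p(\sigma)\}$ is both harder than necessary and contains a real gap.

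\textbf{The paper's shortcut in the simply-connected case.} For Theorem~\ref{main2} the paper avoids bubbling entirely. Wood's formula (Lemma~\ref{wood-01}) expresses $c_1(J)$ as a $2$-form built from $J$ and $\nabla J$ only; since $J_k\to J_\infty$ strongly in $W^{1,2}$, pairing with any closed $2$-form passes to the limit, so $c_1(J_\infty)=c_1(\sigma)$. In the simply-connected case obstruction theory (Lemma~\ref{homotopy1}) says the homotopy class is determined by $c_1$ when $M$ is non-spin, and by $c_1$ together with a $\mathbb{Z}_2$ invariant when $M$ is spin. This immediately gives $[J_\infty]\in\{\sigma,p(\sigma)\}$ with no concentration analysis whatsoever. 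Your bubbling machinery is only needed for Theorem~\ref{main3}, the non-simply-connected case.

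\textbf{The gap in your bubbling step.} Even granting that you want to prove the stronger statement, the argument as written does not go through. The $\varepsilon$-regularity behind Theorem~\ref{main1} applies to \emph{solutions} of the biharmonic system; the elements $J_k$ of a minimizing sequence satisfy no equation at all, so rescaling them around a concentration point does not produce a biharmonic map in the limit, and there is no reason to expect smooth convergence to $J_\infty$ on $M\setminus\Sigma$. The paper is explicit about this: it develops an $\varepsilon$-regularity \emph{for the minimizing sequence itself} (Lemma~\ref{mainlemma02}), proved not via PDE estimates but via a delicate $W^{2,2}$ extension/comparison construction that glues $J_k$ to $J_0$ across a thin annulus while controlling energy. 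This yields that the defect measure is supported on finitely many points and hence $J_k\to J_\infty$ strongly in $W^{2,2}$ away from those points; the homotopy conclusion then follows directly from the definition of $p$, with no bubble extraction, no energy identity, and no neck analysis. So the step you flag as hardest (no-neck) is not the real obstacle; the obstacle is one step earlier, and the paper's resolution is of a quite different character from the Sacks--Uhlenbeck template you propose.
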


When $M$ is not necessarily simply-connected, we have the following,
\begin{thm}\label{main3}
Let $(M, g)$ be a compact almost Hermitian four manifold. Then at least one of the homotopy classes $\sigma$ and $p(\sigma)$ contains  an energy minimizing biharmonic almost complex structure. 
\end{thm}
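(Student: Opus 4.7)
The plan is to apply the direct method of the calculus of variations to the combined class $\sigma \cup p(\sigma)$; the key point will be that a weak $W^{2,2}$-limit of almost complex structures in $\sigma$ can leave $\sigma$ only by absorbing a $\pi_4(S^2) = \mathbb{Z}_2$-bubble, and this $\mathbb{Z}_2$-ambiguity is precisely Donaldson's operation $p$. Concretely, set $E^* := \inf\{\cE_2(J) : [J] \in \{\sigma, p(\sigma)\}\}$ and pick a minimizing sequence $(J_n)$; after extracting a subsequence I may assume $[J_n] = \sigma$ for all $n$. Because each $J_n$ is pointwise bounded (as an orthogonal almost complex structure), the energy bound $\|\Delta_g J_n\|_{L^2} \le C$ together with standard elliptic estimates gives $\|J_n\|_{W^{2,2}(M)} \le C$. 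A further subsequence converges weakly in $W^{2,2}$ and strongly in $L^p$ for every $p<\infty$ (by Rellich) to some $J_\infty$, and since the algebraic identities $J_n^2 = -I$ and $g$-compatibility survive the a.e.\ limit, $J_\infty \in W^{2,2}(M, \cJ_g)$; weak lower semicontinuity of $\cE_2$ then yields $\cE_2(J_\infty) \le E^*$.

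The next and decisive step is to pin down $[J_\infty]$. Because $W^{2,2}$ in dimension four fails to embed into $C^0$, energy may concentrate at finitely many points of $M$. I will view each $J_n$ as a section of the $S^2$-bundle over $M$ whose fibre at $x$ is the space of $g$-compatible almost complex structures on $T_xM$. At each concentration point a standard blow-up should produce a bubble: a nontrivial finite-energy biharmonic map $u : \R^4 \to S^2$ which, by removal of singularities and the smooth regularity in Theorem \ref{main1}, extends to a smooth map $S^4 \to S^2$ and is therefore classified up to homotopy by $\pi_4(S^2) = \mathbb{Z}_2$. The class of $J_\infty$ then differs from $\sigma$ by the $\mathbb{Z}_2$-sum of the bubble invariants, and by the very construction of Donaldson's $p$ (as the action of the generator of $\pi_4(S^2)$ on homotopy classes of almost complex structures) this $\mathbb{Z}_2$-ambiguity is exactly $p$, so $[J_\infty] \in \{\sigma, p(\sigma)\}$. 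Combined with $\cE_2(J_\infty) \le E^*$ this forces $\cE_2(J_\infty) = E^*$, so $J_\infty$ is an energy-minimizer in whichever of the two classes it lies; as a minimizer it satisfies the Euler--Lagrange equation of $\cE_2$ on $\cJ_g$, hence is $W^{2,2}$-biharmonic, and Theorem \ref{main1} upgrades its regularity to smooth.

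The hardest part will be the bubbling analysis: a clean concentration-compactness result for the tensor-valued functional $\cE_2$, identification of bubbles as smooth maps $S^4 \to S^2$, control of the neck regions so that no further homotopy is lost between the bubbles and the weak limit, and the matching of the resulting $\mathbb{Z}_2$-defect with Donaldson's $p$. Since this analysis is purely local and never uses global topology of $M$, I expect it to run in parallel with the spin case of Theorem \ref{main2}; the simple-connectivity hypothesis there was used only to identify homotopy classes via $c_1$, not in the bubbling itself. The essentially new content of Theorem \ref{main3} beyond Theorem \ref{main2} is therefore just the removal of the simple-connectivity assumption, which should come for free once the local bubbling picture is in hand.
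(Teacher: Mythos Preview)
Your global strategy --- minimize over $\sigma\cup p(\sigma)$, take a weak $W^{2,2}$ limit, show the limit stays in this pair, and conclude by lower semicontinuity --- matches the paper. The disagreement is in the core technical step: how to prove that the weak limit $J_\infty$ lies in $\{\sigma,p(\sigma)\}$.

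You propose a Sacks--Uhlenbeck style bubble analysis: blow up at each concentration point, extract a nontrivial biharmonic map $S^4\to S^2$, quote removal of singularities, and read off a $\mathbb{Z}_2$ invariant. The difficulty is that your sequence $(J_n)$ is only an \emph{energy-minimizing sequence}, not a sequence of solutions to any PDE. Rescalings of $J_n$ at a concentration point have no equation to pass to the limit, so there is no reason the blow-up converges to a biharmonic map, no equation on which to run removal of singularities, and no mechanism to control neck regions. (For harmonic maps this is handled by the $\alpha$-perturbed equation; the paper explicitly notes that the analogous perturbed fourth-order system is out of reach here, which is why the authors abandon the bubble route.) So the step ``a standard blow-up should produce a bubble'' is exactly the step that fails to be standard.

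What the paper does instead is work directly with the minimizing sequence and its defect measure $\nu$. The key technical input is an $\epsilon$-regularity for $\nu$ (Lemma \ref{mainlemma02}): if $\mu(B_{2r}(p))<\epsilon_0$ then $\nu\equiv 0$ on $B_r(p)$. This is proved not by blowing up, but by a comparison argument: one builds by hand a competitor $\tilde J_k\in\sigma$ equal to $J_k$ outside $B_r$ and to $J_0$ inside $B_{r-j^{-1}}$, via a delicate $W^{2,2}$ extension across the thin annulus (mollification with spatially varying radius coupled to a cutoff, then projection back to $\cJ_g$). Comparing $\cE_2(\tilde J_k)$ with $\cE_2(J_k)$ yields $\lambda(B_r)\le C\,\nu(\partial B_r)$, and varying $r$ kills the right-hand side. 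This gives finiteness of the concentration set without ever producing or analyzing a bubble; the conclusion $[J_\infty]\in\{\sigma,p(\sigma)\}$ then follows because $J_k\to J_0$ strongly in $W^{2,2}$ off finitely many points, so the homotopy discrepancy is a sum of elements of $\pi_4(S^2)=\mathbb{Z}_2$, which is Donaldson's $p$.

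In short: your outline is right in spirit but the bubbling step is a genuine gap; the paper replaces it with a direct $W^{2,2}$ extension/comparison lemma, and that construction is where essentially all the work lies.
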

It should be true that $\sigma=p(\sigma)$ if and only if $M$ is non-spin, regardless whether $M$ is simply-connected or not. Since our knowledge is not sufficient to justify this statement (after consulting some experts in the filed), hence we state our results separately depending on whether $M$ is simply-connected. 
 
We briefly discuss the proof  of Theorem \ref{main3}. 
We might compare our results with the famous result of Sacks-Uhlenbeck \cite{SU1} on the existence of harmonic maps in  a homotopy class from a two sphere $S^2$ to a compact manifold $M$. Sacks and Uhlenbeck  \cite{SU1} considered a perturbation elliptic system for harmonic maps and one technical core is the bubble analysis. In our case the system for biharmonic almost complex structures is fourth order and a natural perturbed biharmonic system becomes much more complicated. We are not able to prove a regularity result for such a system. This obstructs us to adopt Sacks-Uhlenbeck's approach in \cite{SU1} using perturbed functional and perturbed biharmonic system.

Our approach is to analyze an energy-minimizing sequence directly in a fixed homotopy class. We will get a limit in $W^{2, 2}$ but the convergence is weak (it is convergent strongly in $W^{1, 2}$). Using the special structure of the almost complex structure, we can argue that the weak limit satisfies the elliptic system weakly for biharmonic almost complex structures. By the regularity, we know the limit is a smooth biharmonic almost complex structure, which might not be in the same homotopy class due to the weak convergence. Hence the main difficulty, as in other similar situations, is to understand what exactly happens if the strong convergence fails. 

 A major technical result for us is an $\epsilon$-regularity for an energy-minimizing sequence in a fixed homotopy class: we prove that if the convergence of a weakly convergent energy-minimizing subsequence fails to be strong in $W^{2, 2}$, then there must be energy concentration around finitely many isolated points. Unlike the classical $\epsilon$-regularity in  the theory of harmonic maps, we are working directly with an energy-minimizing sequence and hence have no elliptic system to be dealt with. A main technical tool we develop in the paper is an extension theorem for almost complex structures in $W^{2, 2}$. The classical extension theorems developed by Schoen-Uhlenbeck, F.H. Lin and S. Luckhaus are essentially meant to deal with $W^{1, 2}$ and $W^{1, p}$ Sobolev maps. The method does not work for $W^{2, 2}$ Sobolev maps. One main technical difficulty is to prove such an extension theorem for $W^{2, 2}$ almost complex structures. Our method is  general to work for all $W^{k, p}$ Sobolev maps and we shall discuss such an extension and its application elsewhere.

Motivated by the classical method in the subjects, in particular by F. H Lin's study of harmonic maps \cite{Lin99, Lin01}, we analyze the \emph{defect measure} which measures the failure of strong convergence of energy minimizing $W^{2, 2}$ sequence.  The main technical lemma (Lemma \ref{mainlemma02}) is an $\epsilon$-regularity for the defect measure: if the energy in a ball is sufficiently small, then the defect measure in the limit is identically zero. In short, we prove an $\epsilon$-regularity theorem for a minimizing sequence (for defect measures which are associated with the failure of strong convergence).
We should compare our method with those used in the theory of harmonic maps, notably by F. H. Lin \cite{Lin99, Lin01}. 
Our arguments are very different with those used in the theory of harmonic maps (see F. H Lin \cite{Lin01} for example). In the regularity theory of harmonic maps, several extension theorems of $W^{1, 2}$ ($W^{1, p}$) Sobolev maps play a very important role in \cite{SU, Luck, Lin01} etc, where the authors proved extension theorems using the fact that a ball $B_1$ is bi-Lipschitz equivalent to a cube $[0, 1]^n$. We are dealing with $W^{2, 2}$ objects, it seems to be very hard to generalize the arguments in harmonic maps to $W^{2, 2}$ setting ($W^{k, p}$, for $k\geq 2$).  Instead we prove an extension theorem for $W^{2, 2}$ almost complex structures by direct constructions. Our construction involves choices of a cutoff function and a weighted average technique. A key technical ingredient is that two choices are involved in a very delicate way. 

Our method should have applications in a wide variety of energy-minimizing problems. Recently together with Ruiqi Jiang, Longzhi Lin \cite{HJL} we are able to prove some existence theorem for (extrinsic) poly-harmonic maps in critical dimension in a fixed homotopy class. A key technique is to prove an extension theorem for $W^{k, 2}$ Sobolev maps. Our method should also work for intrinsic biharmonic maps but a new technical issue remains. We will discuss these elsewhere. 

In Appendix we will briefly discuss some possible applications. First we introduce the notion of \emph{biharmonic almost symplectic structures} as generalization of an almost K\"ahler structure. The regularity theory is very similar.  These objects might have interesting applications in symplectic geometry. We also propose a conjecture regarding biharmonic almost complex structures, as a possible application in the theory of smooth topology of a compact  almost complex four manifold. \\

{\bf Acknowledgement:} the author is supported in part by an NSF grant, award no. 1611797. The author thanks Ruiqi Jiang for proofreading the paper. 

\numberwithin{equation}{section}
\numberwithin{thm}{section}
\section{Existence of an energy-minimizer}
In this section we prove the existence of an energy-minimizing biharmonic almost complex structure on $(M, g)$, and derive the Euler-Lagrange equation. To work on these problems,
we have defined the Sobolev spaces of almost complex structures in \cite{He17}. We recall the definition.
\begin{defn}Given an almost Hermitian manifold $(M, g)$ with compatible almost complex structures in $\cJ_g$, we define $W^{k, p}(\cJ_g)$ to be the subspace of $W^{k, p}(T^*M\otimes TM)$ consisting those sections $J\in W^{k, p}(T^*M\otimes TM)$ (locally $J$ is a section of $T^*M\otimes TM$ with $W^{k, p}$ coefficients), such that $J$ satisfies the compatible condition almost everywhere,
\begin{equation}\label{constraint}
J^2=-id, g(J\cdot, J\cdot)=g(\cdot, \cdot).
\end{equation}
\end{defn}
 We can also study energy-minimizing problem without assuming that $J$ is compatible with $g$. Many of the discussions would be the same. But there are several obvious advantages to assume the compatibility condition; one of them is that the compatibility condition implies $J\in L^\infty(T^*M\otimes TM)$ automatically.\\

We have the following, 
\begin{thm}\label{thm-existence}Let $(M, g)$ be a compact Riemannian manifold which has compatible almost complex structures, denoted by $\cJ_g$. Then there exists an energy minimizer of $\cE_2(J)$ in $W^{2, 2}(\cJ_g)$, such that it satisfies the Euler-Lagrangian equation in the weak sense,
\begin{equation}\label{el0}
\Delta^2 J=Q(J, \nabla J, \nabla^2 J, \nabla^3 J),
\end{equation} where the lower order term is given by
\begin{equation}\label{q0}
Q=J\Delta J \Delta J+J\nabla_pJ\nabla_p\Delta J+J\nabla_p\Delta J\nabla_pJ+J\Delta(\nabla_pJ\nabla_pJ). 
\end{equation}
Moreover, energy-minimizers form a sequential compact set in $W^{2, 2}$.
\end{thm}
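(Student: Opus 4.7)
The plan is to apply the direct method of the calculus of variations in the constraint space $W^{2,2}(\cJ_g)$, to derive the Euler-Lagrange equation using constraint-preserving variations, and to upgrade weak convergence of minimizers to strong $W^{2,2}$-convergence for the compactness statement. For existence, I would take a minimizing sequence $J_k \in W^{2,2}(\cJ_g)$. The compatibility condition \eqref{constraint} forces $|J_k|$ to be pointwise bounded, so $\|J_k\|_{L^2}$ is controlled; combined with $\|\Delta J_k\|_{L^2}^2 = \cE_2(J_k) \leq C$ and the standard elliptic estimate $\|u\|_{W^{2,2}} \leq C(\|\Delta u\|_{L^2} + \|u\|_{L^2})$ on the compact manifold $(M,g)$, this yields a uniform $W^{2,2}$ bound. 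Passing to a subsequence, $J_k \rightharpoonup J$ weakly in $W^{2,2}$, strongly in $W^{1,2}$ and in every $L^p$, $p<\infty$, by Rellich--Kondrachov, and a.e.\ along a further subsequence. The quadratic pointwise constraint \eqref{constraint} passes to the a.e.\ limit, so $J \in W^{2,2}(\cJ_g)$; weak lower semicontinuity of the quadratic functional $\cE_2$ then gives $\cE_2(J) \leq \liminf \cE_2(J_k) = \inf \cE_2$, so $J$ is an energy-minimizer.

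To derive \eqref{el0}--\eqref{q0}, I would use admissible variations of the form $J_t = P_t J P_t^T$, where $P_t$ is a smooth family of $g$-orthogonal endomorphisms of $TM$ with $P_0 = \text{id}$ and $\dot P_0 = X$ pointwise $g$-skew-symmetric; the first-order variation is $\dot J_0 = [X, J]$, and every $A \in T_J \cJ_g$ is realized as $[X, J]$ for $X = \tfrac{1}{2} JA$. Differentiating $\cE_2(J_t)$ at $t=0$, integrating by parts, and using the cyclicity of the Frobenius inner product together with $J^T = -J$ and $X^T = -X$, transforms the vanishing first variation into
\[
\int_M \langle X,\, [J,\, \Delta^2 J]\rangle\, dv = 0 \quad \text{for all skew-symmetric } X;
\]
since $[J, \Delta^2 J]$ is itself $g$-skew-symmetric, this forces the distributional identity $[J, \Delta^2 J] = 0$, i.e.\ $J\, \Delta^2 J = \Delta^2 J \cdot J$. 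Separately, applying $\Delta^2$ to the pointwise constraint $J^2 = -\text{id}$ and twice invoking the Leibniz rule for the rough Laplacian on tensor products gives
\[
\Delta^2 J \cdot J + J \cdot \Delta^2 J = -2\big(\Delta J \cdot \Delta J + \nabla_p J \cdot \nabla_p \Delta J + \nabla_p \Delta J \cdot \nabla_p J + \Delta(\nabla_p J \cdot \nabla_p J)\big).
\]
Using $J\,\Delta^2 J = \Delta^2 J \cdot J$ to rewrite the left-hand side as $2 J\,\Delta^2 J$, then multiplying on the left by $J$ and using $J^2 = -\text{id}$, yields exactly \eqref{el0}--\eqref{q0}.

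Finally, for sequential compactness, let $\{J_k\}$ be a sequence of minimizers. The existence argument yields a weak $W^{2,2}$-limit $J$ that is itself a minimizer, hence $\|\Delta J_k\|_{L^2}^2 \to \inf \cE_2 = \|\Delta J\|_{L^2}^2$. Combined with the weak convergence $\Delta J_k \rightharpoonup \Delta J$ in $L^2$, the Hilbert-space norm-plus-weak criterion upgrades this to strong $L^2$-convergence $\Delta J_k \to \Delta J$; together with Rellich's $L^2$-convergence $J_k \to J$ and the elliptic estimate, one obtains strong $W^{2,2}$-convergence. I expect the main technical obstacle to be the algebraic bookkeeping in the Euler-Lagrange derivation: converting the abstract commutator identity $[J, \Delta^2 J] = 0$ into the explicit lower-order form \eqref{q0} requires the careful Leibniz expansion of $\Delta^2(J^2)$ combined with a left-multiplication by $J$ and sign-tracking of the four resulting terms. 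The rest is direct-method boilerplate modulo standard Sobolev embeddings and elliptic estimates for endomorphism-valued sections on $(M,g)$.
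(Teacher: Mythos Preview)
Your proposal is correct and follows essentially the same route as the paper: direct method for existence, the commutator identity $[J,\Delta^2 J]=0$ combined with the Leibniz expansion of $\Delta^2(J^2)=0$ for the Euler--Lagrange equation, and the Hilbert-space ``weak convergence plus norm convergence'' argument for strong $W^{2,2}$ compactness. The only cosmetic difference is that you parametrize admissible variations by conjugation $P_tJP_t^{-1}$ with $P_t$ $g$-orthogonal, whereas the paper works directly with tangent vectors $S$ satisfying $JS+SJ=0$ and $g$-skew-symmetry; both produce the same commutator equation.
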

\begin{proof}We only prove the existence of $W^{2, 2}$ energy minimizer of $\cE_2(J)$ over $W^{2, 2}(\cJ_g).$ We will derive the Euler-Lagrange equation later. 
This is a standard practice of calculus of variations.  Take an energy-minimizing sequence $J_k\in W^{2, 2}(\cJ_g)$, such that
\[
\cE_2(J_k)\rightarrow \inf_{J\in W^{2, 2}(\cJ_g)} \cE_2(J)
\]
Since $J_k\in W^{2, 2}(TM\otimes T^*M)$, it follows from Kondrachov compactness that  a subsequence, still denoted by  $J_k$, converges strongly in $W^{1, 2}(T^*M\otimes TM)$ and weakly in $W^{2, 2}(T^*M\otimes TM)$. Denote the limit by $J_0\in W^{2, 2}(TM\otimes T^*M)$. The strong convergence in $W^{1, 2}$ implies in particular that $J_k$ converges to $J_0$ almost everywhere, hence $J_0$ satisfies the compatible condition \eqref{constraint} almost everywhere. Hence $J_0\in W^{2, 2}(\cJ_g)$. The weak convergence in $W^{2, 2}$ implies that
\[
\cE_2(J_0)\leq \liminf \cE_2(J_k)=\inf_{J\in W^{2, 2}(\cJ_g)} \cE_2(J).
\]
This forces that $\cE_2(J_0)=\min  \cE_2(J)$ and $J_0$ is an energy-minimizer. Moreover this also implies that the convergence $J_k\rightarrow J_0$ is strongly in $W^{2, 2}$ and implies that the set of all energy-minimizers is compact.
\end{proof}

In the next we derive the Euler-Lagrange equation for $\cE_2(J)$. First we assume $J\in \cJ_g$.  Denote the variation of $J$ by $S$, then $S$ satisfies the constraints
\begin{equation}\label{constraint1}
JS+SJ=0, g(S\cdot,\cdot)+g(\cdot, S\cdot)=0
\end{equation}
Note that $\tilde S=JS$ also satisfies \eqref{constraint1} if $S$ does. In other words, $\cJ_g$ inherits a canonical almost complex structure. 
We compute the first variation, 
\[
\delta \cE_2(J)=2\int_M (\Delta^2 J, S)dv=0
\]
It follows that
\[
\int_M (\Delta^2 J, S)dv=\int_M (J(\Delta^2 J) J, S)dv=0.
\]
Hence we have
\[
\int_M (\Delta^2 J+J(\Delta^2 J) J, S)dv=0.
\]
Note that $\Delta^2 J+J(\Delta^2 J) J$ satisfies the constraint \eqref{constraint1}, hence we get the Euler-Lagrange equation
\begin{equation}\label{biharmonic1}
\Delta^2 J+J(\Delta^2 J) J=0.
\end{equation}
An equivalent form is $[\Delta^2 J, J]=0$. Since $J^2=-id$, we can rewrite the equation as (applying $\Delta^2 (J^2)=0$),
\begin{equation}\label{el}
\Delta^2 J=Q(J, \nabla J, \nabla^2 J, \nabla^3 J),
\end{equation}
where $Q$ is given by \eqref{q0}. 
Now we derive the Euler-Lagrange equation for critical points in $W^{2, 2}(\cJ_g)$. 
\begin{prop}\label{weaksol}A critical point of $\cE_2(J)$ on $W^{2, 2}(\cJ_g)$ satisfies the Euler-Lagrange equation \eqref{el} in the following weak sense, for any $T\in W^{2, 2}(T^*M\otimes TM)\cap L^\infty$, we have
\begin{equation}\label{el1}
\int_M (\Delta J-J\nabla J\nabla J, \Delta T)dv+\int_M (A, T)dv+\int_M(B, \nabla T) dv=0,
\end{equation}
where we write
\[
\begin{split}
A=&J\Delta J\Delta J+\nabla_p J\nabla_p J\Delta J-\Delta J\nabla_p J\nabla_p J +\nabla_p J\Delta J\nabla_p J\\
B=&\nabla J \Delta J J+J\Delta J\nabla J
\end{split}
\]
We can also write the equation in an equivalent form,
\begin{equation}\label{el00}
\int_M  (\Delta J, \Delta T)dv+\int_M (A, T)dv+\int_M(\tilde B, \nabla T) dv=0,
\end{equation}
where we write
\[
\tilde B=B+\nabla (J\nabla_pJ\nabla_pJ). 
\]
\end{prop}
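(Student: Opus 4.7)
The plan is to derive \eqref{el1} in three stages: construct an admissible variation of $J$ in $W^{2,2}(\cJ_g)$ from a generic test tensor $T$, differentiate $\cE_2$ along it, and use the algebraic identity coming from $J^2=-\mathrm{id}$ together with integration by parts to produce the final form without ever requiring more than two derivatives of $J$. For $T\in W^{2,2}(T^*M\otimes TM)\cap L^\infty$, I introduce its projection onto the tangent space $T_J\cJ_g$,
\[
S := \tfrac14\bigl(T-T^*+JTJ-JT^*J\bigr),
\]
where $*$ denotes the $g$-adjoint; a direct check gives $S^*=-S$ and $JS+SJ=0$, so $S$ satisfies \eqref{constraint1}. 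Using the smooth pointwise nearest-point retraction $r$ from a tubular neighborhood of $\cJ_g\subset\mathrm{End}(TM)$ onto $\cJ_g$, I set $J_t:=r(J+tS)$ for small $t$; since $r$ is smooth and $J,S\in W^{2,2}\cap L^\infty$, we have $J_t\in W^{2,2}(\cJ_g)$, $J_0=J$, $\dot J_0=S$, and $t\mapsto\cE_2(J_t)$ is differentiable at $0$. An alternative construction is $J_t=e^{tA}Je^{-tA}$ for a suitably chosen skew-adjoint $A\in W^{2,2}\cap L^\infty$.

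Criticality of $J$ then gives $\int_M(\Delta J,\Delta S)\,dv=0$. Expand $4S$; using that $\Delta J$ is $g$-skew (so $(\Delta J,X^*)=-(\Delta J,X)$) and that $(JT^*J)^*=JTJ$, the four contributions collapse to
\[
\int_M(\Delta J,\Delta T)\,dv + \int_M(\Delta J,\Delta(JTJ))\,dv=0.
\]
Apply the Leibniz rule to $\Delta(JTJ)$; this produces one term in $\Delta T$, two in $\nabla T$, and three in $T$ alone, which I move into the required form via the trace identity $(\Delta J,BCD)=(B^*(\Delta J)D^*,C)$. The crucial algebraic input is the pointwise relation
\[
J(\Delta J)+(\Delta J)J+2\nabla_p J\,\nabla_p J=0,
\]
obtained by tracing the second covariant derivative of $J^2=-\mathrm{id}$. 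Used in the form $J(\Delta J)J=\Delta J-2J\nabla_p J\nabla_p J$, it produces the $J\nabla J\nabla J$ correction subtracted from $\Delta J$ in \eqref{el1}; used in the form $(\Delta J)(\Delta J)J=J(\Delta J)(\Delta J)+2\nabla_p J\nabla_p J\,\Delta J-2\Delta J\,\nabla_p J\nabla_p J$, it recombines the stray $T$-terms into the displayed $A$. The two $\nabla T$-terms assemble directly into $B$, giving \eqref{el1}; the equivalent form \eqref{el00} follows by integrating $(J\nabla_p J\nabla_p J,\Delta T)$ by parts once.

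\textbf{Main obstacle.} The real content is not the algebra, which is routine bookkeeping, but verifying that every integral in \eqref{el1} converges for $J\in W^{2,2}$ and $T\in W^{2,2}\cap L^\infty$. In dimension four, $\Delta J\in L^2$, $J\in L^\infty$, and $\nabla J\in L^4$ by the Sobolev embedding $W^{1,2}\hookrightarrow L^4$, hence $\nabla_p J\nabla_p J\,\Delta J\in L^1$ pairs with $T\in L^\infty$, and $\nabla J\,\Delta J\in L^{4/3}$ pairs with $\nabla T\in L^4$ (again by Sobolev). The $L^\infty$ hypothesis on $T$ is essential precisely because of the $\Delta J\,\Delta J$ factor, and the whole derivation is arranged so that no integration by parts ever puts a third derivative on $J$.
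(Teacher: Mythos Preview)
Your proof is correct and follows essentially the same route as the paper: project $T$ onto $T_J\cJ_g$ (your $S$ is, up to a factor of $4$, exactly the paper's $S=A-gA^tg^{-1}$ with $A=T+JTJ$, once one unwinds the $g$-adjoint), reduce to $\int_M(\Delta J,\Delta T)+\int_M(\Delta J,\Delta(JTJ))=0$, expand $\Delta(JTJ)$ by Leibniz, and simplify using the identity $J\Delta J+\Delta J\,J+2\nabla_pJ\nabla_pJ=0$. You are more explicit than the paper on two points it leaves implicit --- the actual construction of a one-parameter family $J_t\in W^{2,2}(\cJ_g)$ with $\dot J_0=S$, and the verification that each integrand in \eqref{el1} lies in $L^1$ via $\nabla J\in L^4$ in dimension four --- but the argument is the same.
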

\begin{proof}
Assume that $T\in W^{2, 2}(\Gamma(T^*M\otimes TM))\cap L^\infty$, denote $A=T+JTJ$, then $JA+AJ=0$. Denote $S=A-gAg^{-1}$, then $S$ satisfies the constraints \eqref{constraint1}. Hence taking variation we have
$\int_M (\Delta J, \Delta S)dv=0.$
Since $J$ is $g$-compatible, this implies
$\int_M (\Delta J, \Delta A)dv=0$. In other words, we have
\[
\int_M (\Delta J, \Delta T)dv+\int_M (\Delta J, \Delta (JTJ))dv=0. 
\]
Note that if $J$ is smooth, we get that $\Delta^2 J+J(\Delta^2 J)J=0$, which can be written as \eqref{el}.  We need to derive the Euler-Lagrangian equation for $J\in W^{2, 2}$. 
We compute
\[
\Delta (JTJ)=J (\Delta T) J+\Delta J T J+J T\Delta J+2\nabla J \nabla T J+2\nabla J T\nabla J+2J\nabla T\nabla J
\]
Since $J$ is $g$-compatible, we have the following, 
\[
\begin{split}
(\Delta J, J \Delta T J)=&\Delta J^j_i J_a^b\Delta T_b^c J_c^d g_{jd} g^{ai}\\
=& \Delta J^j_i J_a^b J_c^d g_{jd}g^{ai } \Delta T_b^c\\
=&J_a^i\Delta J^j_i J_j^d \Delta T_b^c g^{ab} g_{cd}\\
=&(J\Delta J J, \Delta T)
\end{split}
\]
Similarly, we have
\[
(\Delta J, \Delta J T J)=(\Delta J \Delta J J, T)
\]
Hence we get that
\begin{equation}\label{el2}
\int_M (\Delta J+J\Delta J J, \Delta T)+ (A_0, T)+2(B, \nabla T)=0,
\end{equation}
where we have
\[
\begin{split}
A_0=&\Delta J\Delta J J+J\Delta J\Delta J +2\nabla_p J\Delta J\nabla_p J\\
B=&\nabla J \Delta J J+J\Delta J\nabla J
\end{split}
\]
On the other hand, we use $\Delta (J^2)=0$ to conclude that
\[
\begin{split}
&\Delta J J+J\Delta J+2\nabla_p J\nabla_p J=0\\
&J\Delta J J=\Delta J-2J\nabla_p J\nabla_p J.
\end{split}
\]
We compute that
\[
\Delta J\Delta J J=J\Delta J\Delta J+2\nabla_p J\nabla_p J\Delta J-2\Delta J\nabla_p J\nabla_p J 
\]
Hence we get that
\[
A_0=2\left(J\Delta J\Delta J+\nabla_p J\nabla_p J\Delta J-\Delta J\nabla_p J\nabla_p J +\nabla_pJ\Delta J\nabla_p J\right)
\]
We denote $A_0=2A$, with 
\[
A=J\Delta J\Delta J+\nabla_p J\nabla_p J\Delta J-\Delta J\nabla_p J\nabla_p J +\nabla_p J\Delta J\nabla_p J
\]
Together with \eqref{el2}, we have
\begin{equation}\label{el3}
\int_M \left(\Delta J-J\nabla J\nabla J, \Delta T\right)+(A, T)+(B, \nabla T)=0
\end{equation}
This completes the proof of \eqref{el1}. Certainly we can also write
\[
\int_M (-J\nabla J\nabla J, \Delta T)dv=\int_M \left(\nabla (J\nabla_p J\nabla_p J), \nabla_i T\right)dv
\]
This would lead to the equation
\begin{equation}\label{el4}
\int_M (\Delta J, \Delta T)dv+\int_M (A, T)dv+\int_M (\tilde B, \nabla T)dv=0,
\end{equation}
where we write
\[
\tilde B=B+\nabla (J\nabla_p J\nabla_p J)
\]
This finishes the proof. 
\end{proof}

In the proof of the above result,  it is rather straightforward to see that Proposition \ref{weaksol} is equivalent to \eqref{biharmonic1} for $J\in W^{2, 2}$, if  \eqref{biharmonic1} is understood as a weak solution in the form
that
\begin{equation}\label{weak-101}
\int_M (\Delta J, \Delta T)dv+\int_M (\Delta J, \Delta(JTJ) ) dv=0. 
\end{equation}
For $J\in W^{2, 2}$, the derivatives of the forms $\Delta^2 J$ and $\nabla_p \Delta J$ are understood in a distribution way properly. With this understanding, Proposition \ref{weaksol} is equivalent to the system \eqref{el} (and $[\Delta^2 J, J]=0$). 
An almost complex structure $J\in W^{2, 2}(\cJ_g)$ is called a \emph{weak biharmonic almost complex structure} if it satisfies \eqref{weak-101}, or its equivalent form in Proposition \ref{weaksol}. 
The formulation \eqref{weak-101} has several equivalent formulations. 

\begin{prop}\label{weakac-01}A weak biharmonic almost complex satisfies the following, for any $T\in \Gamma(TM\otimes T^*M)$
\begin{equation}\label{weak-102}
\int_M (\Delta J, \Delta (TJ-JT))dv=0
\end{equation}
Or equivalently, we have the following,
\begin{equation}\label{weak-103}
\int_M (\Delta J, \Delta T J-J\Delta T)dv+2\int_M (\Delta J, \nabla T\nabla J-\nabla J\nabla T)dv=0.
\end{equation}
As a consequence, the weak limit of a sequence of $W^{2, 2}(\cJ_g)$ biharmonic almost complex structure (with bounded $W^{2, 2}$ norm) is still biharmonic. 
\end{prop}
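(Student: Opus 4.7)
The plan is to derive \eqref{weak-102} from \eqref{weak-101} by a test-tensor substitution, then obtain \eqref{weak-103} from \eqref{weak-102} by expanding the Laplacian via the product rule and discarding a quadratic-in-$\Delta J$ term that vanishes by a $g$-skewness identity, and finally to use the resulting formulation---which is \emph{linear} in $\Delta J$---to pass to weak limits. The central algebraic input is that the compatibility $g(J\cdot,J\cdot)=g$ forces $\Delta J$ to be skew with respect to $g$, and this is precisely what makes the problematic quadratic top-order term cancel pointwise.

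For \eqref{weak-102}, given $T\in W^{2,2}\cap L^\infty$ the product $TJ$ lies again in $W^{2,2}\cap L^\infty$; in dimension four the only nontrivial check is $\nabla T\cdot\nabla J\in L^2$, which follows from the critical Sobolev embedding $W^{1,2}\hookrightarrow L^4$. Substituting $T\mapsto TJ$ in \eqref{weak-101} and using $J^2=-\mathrm{id}$ to simplify $J(TJ)J=-JT$ yields
\[
\int_M (\Delta J,\Delta(TJ))\,dv - \int_M (\Delta J,\Delta(JT))\,dv = 0,
\]
which is \eqref{weak-102}.

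Next, expand
\[
\Delta(TJ-JT) = (\Delta T\cdot J - J\,\Delta T) + 2(\nabla T\,\nabla J - \nabla J\,\nabla T) + (T\,\Delta J - \Delta J\,T)
\]
by the product rule. The first two groups are exactly the integrands of \eqref{weak-103}, so the equivalence reduces to the pointwise vanishing of $(\Delta J,\,T\,\Delta J - \Delta J\,T)$. This is the main obstacle: unlike the other terms it is \emph{quadratic} in the top-order derivative $\Delta J$ and, if nonzero, would obstruct the weak-limit argument below. The crucial point is that $J_{ij}:=g_{ik}J^k_j$ is antisymmetric (compatibility), and since $\nabla g=0$ commutes the Laplacian with index lowering, $(\Delta J)_{ij}=g_{ik}(\Delta J)^k_j$ is also antisymmetric. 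A short index computation in an orthonormal frame then shows that for any $g$-antisymmetric endomorphism $A$ and arbitrary $T$ one has $(A,TA)=(A,AT)$, so the integrand vanishes pointwise and \eqref{weak-103} follows (the converse implication is the same identity run backwards).

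Finally, for stability under weak limits, let $J_n\in W^{2,2}(\cJ_g)$ be weak biharmonics with bounded $W^{2,2}$-norm and $J_n\rightharpoonup J_\infty$ weakly in $W^{2,2}$. Rellich together with the pointwise bound $|J_n|\le C$ from compatibility yield $J_n\to J_\infty$ strongly in $W^{1,2}$ and in every $L^p$ with $p<\infty$, while $\Delta J_n\rightharpoonup\Delta J_\infty$ weakly in $L^2$. Testing \eqref{weak-103} against a smooth $T$ (so that $\Delta T,\nabla T\in L^\infty$), the coefficient tensors $\Delta T\cdot J_n - J_n\,\Delta T$ and $\nabla T\,\nabla J_n - \nabla J_n\,\nabla T$ converge strongly in $L^2$, since $\Delta T$ and $\nabla T$ are bounded while $J_n$ and $\nabla J_n$ converge strongly in $L^2$. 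The standard weak-strong pairing then passes \eqref{weak-103} from $J_n$ to $J_\infty$, showing that $J_\infty$ is a weak biharmonic almost complex structure. The whole reason for working with \eqref{weak-103} rather than \eqref{weak-101} or \eqref{weak-102} is precisely this linearity in $\Delta J$: the original formulations contain products like $(\Delta J)TJ$ or $T\Delta J$ paired with $\Delta J$, which are not stable under weak $L^2$ convergence, whereas the antisymmetry identity removes exactly those obstructions.
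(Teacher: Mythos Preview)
Your proof is correct and follows essentially the same route as the paper: the substitution $T\mapsto TJ$ in \eqref{weak-101} to obtain \eqref{weak-102}, the key cancellation $(\Delta J,\,T\Delta J)=(\Delta J,\,\Delta J\,T)$ coming from the $g$-skewness of $\Delta J$ (the paper carries this out by an index computation, you phrase it via $A^t=-A$ and cyclicity of the trace), and the weak--strong pairing argument for the limit. Your added emphasis on why \eqref{weak-103} is the right formulation for passing to weak limits---linearity in $\Delta J$ after the quadratic top-order term has been removed---is a helpful clarification that the paper leaves implicit.
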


\begin{proof}
To derive \eqref{weak-102}, we take $T=\tilde T J$ in \eqref{weak-101}.  We observe that
\begin{equation}\label{weak-104}
\int_M (\Delta J, (\Delta J) T)dv=\int_M (\Delta J, T \Delta J)dv.
\end{equation}
Note that
\[
(\Delta J, \Delta J T)=\Delta J_{i}^j \Delta J_a^b T_b^c g_{jc} g^{ia}=\Delta J_i^j \Delta J_a^b g_{jc} g^{ia} T^c_b 
\]
Note that $\Delta J^b_a g^{ia}=\Delta (J^b_a g^{ia})=-\Delta (J^i_a g^{ba})=-(\Delta J^i_a) g^{ba}$. Hence we have
\[
(\Delta J, \Delta J T)=\Delta J_i^j \Delta J_a^b g_{jc} g^{ia} T^c_b =-\Delta J_a^i\Delta J_i^j g_{jc} g^{ab} T_b^c=-(\Delta J\Delta J, T). 
\]
Similarly we have 
\[
(\Delta J, T\Delta J)=-(\Delta J\Delta J, T). 
\]
This proves \eqref{weak-104}. Together with \eqref{weak-102}, this proves \eqref{weak-103}. Now suppose a sequence of weak biharmonic almost complex structures $J_k$, with bounded $W^{2, 2}$ norm, converges weakly in $W^{2, 2}$ to $J_0$, and strongly in $W^{1, 2}$ (and $L^2$). In particular $J_k$ converges to $J_0$ almost everywhere. By passing to the limit, then $J_0$ satisfies \eqref{weak-103}. More precisely, for any fixed  smooth $T$, 
we have (using the fact that smooth endomorphisms are dense in $W^{2, 2}$ and weak convergence of $J_k\rightarrow J_0$ in $W^{2, 2}$), 
\[
\int_M (\Delta (J_k-J_0), \Delta T J_0-J_0\Delta T)dv \rightarrow 0.
\]
The strong convergence $J_k\rightarrow J_0$ in $L^2$ implies that
\[
 \int_M (\Delta J_k, \Delta T (J_k-J_0)-(J_k-J_0)\Delta T)dv\rightarrow 0
\]
Together this implies that
\[
\int_M (\Delta J_k, (\Delta T)J_k-J_k\Delta T)dv\rightarrow \int_M (\Delta J_0, (\Delta T)J_0-J_0\Delta T)dv.
\]
Similarly we have 
\[
\int_M (\Delta J_k, \nabla T\nabla J_k-\nabla J_k\nabla T)dv\rightarrow \int_M (\Delta J_0, \nabla T\nabla J_0-\nabla J_0\nabla T)dv
\]
This completes the proof. 
\end{proof}

\begin{proof}[Proof of Theorem 1]As we have shown above, the existence of energy-minimizing biharmonic almost complex structures is a standard practice and the energy-minimizers form a compact set, which works for all dimensions. To finish the proof of Theorem \ref{thm-existence} we need to show that in dimension four, a weak biharmonic almost complex structure is always smooth. 

Our method is motivated by the theory of biharmonic maps considered in \cite{CWY} in principle. Our method works for  very general semilinear elliptic systems with critical nonlinear growth. We will  refer the smooth regularity to 
 \cite{HJ19}, where together with Ruiqi Jiang we will prove a  general regularity result for elliptic systems. The results in \cite{HJ19} include the elliptic system of (weakly) biharmonic almost complex structures on a compact four manifold $M^4$ as a special case. 

\end{proof}

\section{Biharmonic almost complex structures in a homotopy class}\label{sh3}
A fundamental problem in the theory of harmonic maps is to find harmonic maps in a fixed homotopy class. Similarly we would like to ask the same question for biharmonic almost complex structures. 
We shall see that the topology of $M$ plays a very important role.
The following famous example about almost complex structures on a $K3$ surface constructed by S. Donaldson \cite{Donaldson90} serves as an important example. We recall relevant discussions.
 A compatible almost complex structure $J$ on an oriented Riemannian four-manifold $M$ can be considered as a section of the associated $SO(4)/U(2)$-bundle over $M$ (the twistor space). 
 Let $A$ denote the set of homotopy classes of almost complex structures, so the first Chern class gives a map
 \[
 c_1: A\rightarrow H^2(M, \mathbb{Z}).
 \]
 Donaldson \cite[Section 6]{Donaldson90} defined a map $p: A\rightarrow A$ with $p^2=\text{id}$ and $c_1\circ p=c_1$ as follows. For $\sigma\in A$, $p(\sigma)$ agrees with $\sigma$ outside a small ball in $M$ and over this ball the two compare by the nonzero element of
 \[
 [S^4, SO(4)/U(2)]=[S^4, S^2]\cong \mathbb{Z}/2.
 \]
Donaldson proved \cite[Corollary 6.5]{Donaldson90} that if $\sigma$ is the homotopy class of the standard integrable complex structures, then the homotopy class $p(\sigma)$ of almost complex structures with $c_1=0$ on the $K3$ surface does not contain any integrable representative.
We need the following general result about $\sigma$ and  $p(\sigma)\in A$, which we have learned from P. Teichner through mathoverflow \cite{P1},

\begin{lemma}\label{homotopy1}
If $M$ is simply-connected and is not spin, then the first Chern class uniquely determines the homotopy class of an almost complex structure. When $M$ is simply-connected and spin, there are exactly a pair of homotopy classes $\sigma, p(\sigma)$ corresponding to a given first Chern class. 
\end{lemma}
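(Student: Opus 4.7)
The plan is to apply obstruction theory to sections of the twistor bundle. Donaldson's viewpoint presents a $g$-compatible almost complex structure as a section of the $S^2$-bundle $\pi\colon Z = SO(M)/U(2)\to M$; two such structures are homotopic through almost complex structures iff they are homotopic as sections. To classify $\pi_0$ of the section space, I would climb the cellular filtration of $M$ and record the difference obstructions. The relevant homotopy groups of the fibre are $\pi_2(S^2)=\Z$, $\pi_3(S^2)=\Z$, and $\pi_4(S^2)=\Z/2$. For a simply connected closed oriented four-manifold, Poincar\'e duality gives $H^1(M;A)=H^3(M;A)=0$ for every coefficient group $A$, so the only potentially non-vanishing difference obstructions lie in $H^2(M;\Z)$ (primary) and $H^4(M;\Z/2)=\Z/2$ (top).

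The primary difference class in $H^2(M;\Z)$ is identified with the difference of first Chern classes --- the classical computation of Hirzebruch--Hopf, arising from the fact that the vertical tangent bundle of $Z$ along a section $J$ is isomorphic, as an oriented rank-two real bundle, to the underlying bundle of the canonical line bundle $K_J = \Lambda^{2,0}T^*M$, whose Euler class is $-c_1(M,J)$. Consequently two sections agree up to homotopy on the $2$-skeleton iff $c_1(J_0)=c_1(J_1)$; together with $H^3(M;\Z)=0$ they then agree on the $3$-skeleton. What is left is the top $\Z/2$-obstruction. By construction, Donaldson's involution $p$ modifies a section by the generator of $\pi_4(S^2)$ supported in a single $4$-ball, so $p$ implements exactly this residual $\Z/2$-action. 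Already this shows that $c_1^{-1}(c)\subset A$ has at most two elements, permuted by $p$.

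To decide whether $p$ is trivial one must compute the genuine indeterminacy of the top obstruction, which comes from re-choosing the homotopy along the $3$-skeleton. In the Atiyah--Hirzebruch-style spectral sequence for sections of $Z$ this is the differential $d_2\colon E_2^{2,3} = H^2(M;\pi_3(S^2)) \to E_2^{4,4} = H^4(M;\pi_4(S^2))$, i.e.\ a map $H^2(M;\Z)\to H^4(M;\Z/2)$. The second $k$-invariant of the Postnikov tower of $S^2$ --- reflecting that the generator of $\pi_4(S^2)$ is the composite $\eta\circ\Sigma\eta$ --- identifies $d_2$ with the cup-square reduced mod $2$, namely $d_2(x)\equiv x\cup x\pmod 2$. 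By Wu's formula on the closed oriented $4$-manifold $M$, $Sq^2 y = w_2(M)\cup y$ on $H^2(M;\Z/2)$, so $d_2(x)\equiv w_2(M)\cup x\pmod 2$.

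The two cases then split cleanly. If $M$ is spin, $w_2(M)=0$, so $d_2\equiv 0$: the $\Z/2$-obstruction is genuine, and $\sigma$ and $p(\sigma)$ are distinct homotopy classes sharing the same first Chern class. If $M$ is simply connected and non-spin, $w_2(M)\neq 0$; simple connectivity forces $H^2(M;\Z)$ to be torsion-free, and Poincar\'e duality mod $2$ on $M$ produces $x\in H^2(M;\Z)$ with $x\cup x$ odd, so $d_2$ surjects onto $\Z/2$. The top obstruction is then absorbed into the indeterminacy, which gives $\sigma = p(\sigma)$ and makes $c_1$ injective on $A$. The main technical obstacle in this plan is the identification of $d_2$ with the cup-square, which requires a careful analysis of the two-stage Postnikov tower of $S^2$ and the fact that the relevant $k$-invariant in $H^5(K(\Z,2);\Z/2)$ is $Sq^2$ applied to the fundamental class.
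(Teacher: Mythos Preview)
Your argument is correct and follows the same obstruction-theoretic framework as the paper: both view almost complex structures as sections of the twistor $S^2$-bundle (equivalently, lifts of the Gauss map $M\to BSO(4)$ to $BU(2)$), observe that for a simply-connected $M$ the only nontrivial difference obstructions lie in $H^2(M;\Z)$ and $H^4(M;\Z/2)$, and identify the primary one with the difference of first Chern classes. Where you and the paper diverge is in the final step, deciding when the top $\Z/2$ obstruction is genuine. The paper reduces this to the question of whether the collapse map $M\to M/X\cong S^4$ induces an injection $[S^4,S^2]\to[M,S^2]$ on cohomotopy, and then cites the result of Kirby--Melvin--Teichner that this holds precisely when $M$ is spin. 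You instead compute the indeterminacy directly: the freedom in re-choosing the homotopy over the $2$-skeleton acts through a differential $H^2(M;\pi_3(S^2))\to H^4(M;\pi_4(S^2))$, which you identify via the second $k$-invariant of $S^2$ as the mod-$2$ cup-square, and then invoke Wu's formula $x\cup x\equiv w_2\cup x$ to read off the spin/non-spin dichotomy. Your route is more self-contained and makes the mechanism transparent, at the cost of the Postnikov-tower identification you flag as the technical crux; the paper's route is shorter on the page but defers the real content to the cited reference. Both arrive at the same conclusion by equivalent means.
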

\begin{proof}This can be proved by obstruction theory for the fibration 
\[
F=SO(4)/U(2)\rightarrow BU(2)\rightarrow BSO(4)
\]
where the fiber $F=S^2$. Start with the tangent bundle of an oriented compact four-manifold $M$. Asking for existence and uniqueness respectively for an almost complex structure on $M$ is equivalent to 
asking for existence and uniqueness respectively of a lift of its Gauss-map $M\rightarrow BSO(4)$ to $M\rightarrow BU(2)$. The obstructions for existence lie in $H^{i+1}(M; \pi_i(F))$ and the obstructions for uniqueness lie in $H^i(M; \pi_i(F))$. Assume that M is simply-connected and let $X$ be its 2-skeleton. The same obstruction theory shows that the tangent bundle of $X$ has a complex structure and it is uniquely determined by the first Chern-class. If such a structure extends to $M$, there is a single uniqueness obstruction in 
\begin{equation}\label{top01}
H^4(M, X; \pi_4(F))=\pi_4(S^2)=\mathbb{Z}/2.
\end{equation}
By naturality, this obstruction is realized by a second almost complex structure on $M$ with the same first Chern class if and only if the quotient map $M\rightarrow M/X=S^4$ induces a nontrivial map on cohomotopy:
\begin{equation}\label{top02}
\pi_4(S^2)=[S^4, S^2]\rightarrow [M, S^2].
\end{equation}
This map is nontrivial if and only if $M$ is spin, see \cite{KMT}. Hence the first Chern-class characterizes almost complex structures on $M$ if and only if $M$ is not spin ($M$ is simply-connected). 
\end{proof}

When $M$ is not necessarily simply-connected, we still consider the map $p: A\rightarrow A$.
Fix a homotopy class $\sigma\in A$,  and denote
$E_0(\sigma)=\inf_{J\in \sigma} \cE_2(J). $
The main result in this section is the following,

\begin{thm}Let $M$ be a compact four manifold with an almost complex structure. Denote $\sigma$ to be one homotopy class of an almost complex structure. Then either $\sigma$ or $p(\sigma)$ contains an energy-minimizing biharmonic almost complex structures. 
\end{thm}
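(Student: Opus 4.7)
The plan is to take an energy-minimizing sequence in the homotopy class $\sigma$, extract a weak $W^{2,2}$ limit, use a bubble analysis to show that the homotopy class of the limit lies in $\{\sigma,p(\sigma)\}$, and then exploit the involution $p^2=\text{id}$ to close the argument by symmetry.

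First I fix a minimizing sequence $J_k\in\sigma$ with $\cE_2(J_k)\to E_0(\sigma)$. The uniform $W^{2,2}$ bound together with Kondrachov compactness produces (after passing to a subsequence) a limit $J_\infty\in W^{2,2}(\cJ_g)$ with $J_k\to J_\infty$ weakly in $W^{2,2}$, strongly in $W^{1,2}$ and a.e.; the compatibility condition \eqref{constraint} passes to the a.e.\ limit and $\cE_2(J_\infty)\le E_0(\sigma)$ by lower semicontinuity. Since each $J_k$ is a fortiori a weak biharmonic almost complex structure, Proposition \ref{weakac-01} shows that $J_\infty$ inherits the weak biharmonic equation, and the smooth regularity inside Theorem \ref{thm-existence} promotes $J_\infty$ to a smooth biharmonic almost complex structure on $M$.

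The central task is to pin down the homotopy class of $J_\infty$. Using the $\epsilon$-regularity for minimizing sequences (Lemma \ref{mainlemma02}), the convergence $J_k\to J_\infty$ is strong in $W^{2,2}_{\mathrm{loc}}(M\setminus\{x_1,\dots,x_N\})$ off of a finite set of energy-concentration points. Around each $x_i$ one performs the standard blow-up: since compatible almost complex structures on a tiny ball are sections of the trivial twistor bundle with fiber $SO(4)/U(2)=S^2$, a suitable rescaling produces a finite-energy biharmonic map $\R^4\to S^2$ which, by the conformal invariance of $\cE_2$ in dimension four together with a removable-singularity argument, descends to a map $S^4\to S^2$ and hence represents a bubble class $\alpha_i\in\pi_4(S^2)=\Z/2$. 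Using the $W^{2,2}$-extension theorem for almost complex structures to splice $J_\infty$ to $J_k$ on thin annuli surrounding each $x_i$, one identifies the homotopy defect of $J_\infty$ relative to $\sigma$ precisely with $\sum_i\alpha_i\in\Z/2$. By the very definition of Donaldson's involution $p$, this forces $[J_\infty]\in\{\sigma,p(\sigma)\}$.

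Finally I close the argument. If $[J_\infty]=\sigma$, the inequality $\cE_2(J_\infty)\le E_0(\sigma)$ already exhibits $J_\infty$ as an energy minimizer in $\sigma$. If instead $[J_\infty]=p(\sigma)$, I repeat the whole procedure with a minimizing sequence in $p(\sigma)$, producing a smooth biharmonic $J'_\infty$ with $[J'_\infty]\in\{p(\sigma),\sigma\}$, using $p^2=\text{id}$. In the remaining subcase $[J'_\infty]=\sigma$, the chain
\[
E_0(\sigma)\le \cE_2(J'_\infty)\le E_0(p(\sigma))\le \cE_2(J_\infty)\le E_0(\sigma)
\]
collapses to equalities, so $J_\infty$ and $J'_\infty$ are energy minimizers in $p(\sigma)$ and $\sigma$ respectively. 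In every case at least one of the two classes contains a minimizer. The hard part is the middle step: proving the $\epsilon$-regularity for a minimizing sequence (rather than for a single biharmonic solution) and constructing a $W^{2,2}$ extension/gluing argument for tensor-valued sections, since the classical Schoen-Uhlenbeck-Luckhaus-type extensions are tailored to $W^{1,p}$ and do not adapt directly to second-order Sobolev spaces of $\cJ_g$-valued maps.
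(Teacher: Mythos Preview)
Your overall architecture matches the paper's, but there is one genuine gap and one unnecessary detour.

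\textbf{The gap.} You write ``since each $J_k$ is a fortiori a weak biharmonic almost complex structure, Proposition \ref{weakac-01} shows that $J_\infty$ inherits the weak biharmonic equation.'' This is false: the $J_k$ are arbitrary competitors in a minimizing sequence, not critical points, so they satisfy no Euler--Lagrange equation and Proposition \ref{weakac-01} does not apply. The paper handles this differently (see the lemma immediately following the statement of the theorem): for each $J_k$ one builds nearby competitors $J_k^t$ in the \emph{same} homotopy class via $J_k^t=(id-S_t)J_k(id-S_t)^{-1}$, expands $\cE_2(J_k^t)=\cE_2(J_k)+4t\int_M(\Delta J_k,\Delta(J_kT-TJ_k))\,dv+O(t^2)$, and uses the minimizing property $\liminf_k(\cE_2(J_k^t)-\cE_2(J_k))\ge 0$ for both signs of $t$ to force $\lim_k\int_M(\Delta J_k,\Delta(J_kT-TJ_k))\,dv=0$. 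Only then does one pass to the weak limit in the specific bilinear form \eqref{weak-103}. Without this step your argument has no mechanism producing the equation for $J_\infty$.

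\textbf{The detour.} Your bubble analysis (rescaling to a finite-energy biharmonic map $\R^4\to S^2$, removable singularity, extraction of classes $\alpha_i$) is not what the paper does and would be hard to justify as stated: since the $J_k$ are not biharmonic, neither are their rescalings, so the limit bubble has no reason to be biharmonic, and a removable-singularity theorem for such maps is an extra nontrivial input. The paper bypasses all of this. Once Lemma \ref{mainlemma02} gives strong $W^{2,2}$ convergence on $M\setminus\{x_1,\dots,x_N\}$, White's theorem makes $J_k$ and $J_\infty$ homotopic there; the remaining discrepancy lives over finitely many small balls, and over each ball the two sections of the (trivialized) twistor bundle differ by an element of $[S^4,S^2]=\Z/2$. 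By the very definition of $p$ this places $[J_\infty]\in\{\sigma,p(\sigma)\}$ with no bubble extraction needed. Your final symmetry argument is fine and is equivalent to the paper's ``assume $E_0(p(\sigma))\le E_0(\sigma)$ and minimize in $p(\sigma)$'' formulation.
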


The proof of the above theorem consists of several steps. 
Let $J_k\in \sigma$ be a minimizing sequence such that $\cE_2(J_k)\rightarrow E_0$. A direct computation by integration by parts implies that $J_k$ has uniformly bounded $W^{2, 2}$ norm. Hence by subsequence we can assume that $J_k\rightharpoonup J_0$ weakly in $W^{2, 2}$ and strongly in $W^{1, 2}$. In particular $J_k$ converges to $J_0$ almost everywhere and hence $J_0\in W^{2, 2}(\cJ_g)$. First we have the following,

\begin{lemma}The limit $J_0\in W^{2, 2}(\cJ_g)$ is a weak biharmonic almost complex structure satisfying \eqref{weak-103}. In particular it is smooth.
\end{lemma}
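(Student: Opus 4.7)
The plan is to verify that $J_0$ satisfies equation \eqref{weak-102} for every smooth $T$; by Proposition \ref{weakac-01} this is equivalent to \eqref{weak-103}, and smoothness then follows from Theorem \ref{main1}. The core idea is to extract an approximate Euler--Lagrange identity for each $J_k$, using only the minimizing property in the homotopy class $\sigma$, and then pass to the weak limit in essentially the same way as in the stability argument of Proposition \ref{weakac-01}.

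Fix a smooth endomorphism $T$ and set $S_k := J_k T - T J_k$. Since $J_k^2 = -\text{id}$, a direct computation gives $J_k S_k + S_k J_k = 0$, so $S_k$ is an infinitesimally admissible variation at $J_k$. I build a one-parameter family $J_k^t \in \cJ_g$ with $J_k^0 = J_k$ and $\frac{d}{dt}\big|_{t=0} J_k^t = S_k$, for instance by the fiberwise nearest-point projection of $J_k + t S_k$ onto $\cJ_g$. For $|t|$ sufficiently small this family stays in the homotopy class $\sigma$, hence $\cE_2(J_k^t) \geq E_0$. Because $\|J_k\|_{W^{2,2}}$ is bounded and $T$ is fixed, a Taylor expansion yields
\begin{equation*}
\cE_2(J_k^t) = \cE_2(J_k) + 2t \int_M (\Delta J_k, \Delta S_k)\, dv + R_k(t), \qquad |R_k(t)| \leq C t^2,
\end{equation*}
with $C$ independent of $k$. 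Setting $\delta_k := \cE_2(J_k) - E_0 \to 0$ and comparing $\cE_2(J_k^t) \geq E_0$ at $t = \pm \sqrt{\delta_k}$, I conclude
\begin{equation*}
\int_M (\Delta J_k, \Delta(J_k T - T J_k))\, dv \longrightarrow 0.
\end{equation*}

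I next pass to the weak limit along the lines of Proposition \ref{weakac-01}. Expanding $\Delta(J_k T - T J_k)$ produces one piece quadratic in $\Delta J_k$, namely $(\Delta J_k) T - T (\Delta J_k)$, whose pairing with $\Delta J_k$ vanishes pointwise by the metric identity $(\Delta J_k, (\Delta J_k) T) = -(\Delta J_k \Delta J_k, T) = (\Delta J_k, T (\Delta J_k))$ already used there; the remaining terms are linear in $\Delta J_k$ and paired with smooth derivatives of $T$ multiplied by $J_k$ or $\nabla J_k$. Since $J_k$ is uniformly bounded in $L^\infty$, and both $J_k$ and $\nabla J_k$ converge strongly in $L^2$ while $\Delta J_k \rightharpoonup \Delta J_0$ weakly in $L^2$, every such cross term passes to the limit, yielding
\begin{equation*}
\int_M (\Delta J_0, \Delta(J_0 T - T J_0))\, dv = 0
\end{equation*}
for all smooth $T$. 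This is \eqref{weak-102}, equivalent to \eqref{weak-103}, and Theorem \ref{main1} then gives smoothness of $J_0$. The hard part will be ensuring that the perturbed family $J_k^t$ genuinely lies in $\sigma$ for a range of $t$ not collapsing as $k \to \infty$: since $W^{2,2}$ is critical for $L^\infty$ in four dimensions, one cannot control $J_k^t$ in $C^0$ purely from the $W^{2,2}$ bound. I plan to handle this by first replacing $J_k$ with a smooth representative in $\sigma$ whose energy is arbitrarily close to $\cE_2(J_k)$ (using density of smooth almost complex structures within a homotopy class), so that the $C^0$-openness of homotopy classes of smooth maps provides a uniform $\tau_0 > 0$ for which $J_k^t \in \sigma$ whenever $|t| \leq \tau_0$.
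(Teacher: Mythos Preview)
Your approach is essentially the paper's: perturb each $J_k$ by a competitor built from $T$, extract the first variation $\int_M(\Delta J_k,\Delta(J_kT-TJ_k))\,dv\to 0$ from the minimizing property, then pass to the limit exactly as in Proposition~\ref{weakac-01}. The paper carries this out with the explicit Cayley--type family
\[
J_k^t=(\mathrm{id}-S_t)\,J_k\,(\mathrm{id}-S_t)^{-1},\qquad S_t=t\bigl[(T+J_kTJ_k)-g(T+J_kTJ_k)g^{-1}\bigr],
\]
which is manifestly a $g$-compatible almost complex structure whenever $\|S_t\|_{L^\infty}$ is small; since $T$ is smooth and $|J_k|$ is pointwise bounded, this holds for $|t|\le\tau_0$ with $\tau_0$ independent of $k$, and the path $s\mapsto J_k^{st}$ keeps $J_k^t$ in $\sigma$ automatically. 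In particular your closing worry is unnecessary: the minimizing sequence $J_k$ already lives in the homotopy class $\sigma$ (so may be taken smooth), and the perturbation is $C^0$-small because $T$ is fixed and smooth---the critical Sobolev embedding never enters. Your nearest-point projection and the $t=\pm\sqrt{\delta_k}$ trick work, but the explicit formula makes the uniform $O(t^2)$ bound and the homotopy preservation transparent without the detour through smooth approximants.
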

\begin{proof}Let $J_k$ be a minimizing sequence and $J_0$ be its weak limit in $W^{2, 2}(\cJ_g)$. For any fixed smooth $T\in \Gamma(TM\otimes T^*M)$, consider 
\[
S_t= t (T+J_k TJ_k)-tg(T+J_kTJ_k)g^{-1}
\]
Hence $S_t$ satisfies $J_kS_t+S_tJ_k=0$, and $g(S_t, \cdot)+g(\cdot, S_t)=0$. Note that for $|t|$ sufficiently small, $S_t$ has small $L^\infty$ norm and we construct 
\[
J_k^t=J_k (id+S_t)(id-S_t)^{-1}=(id-S_t) J_k (id-S_t)^{-1}. 
\]
We compute, for $|t|$ sufficiently small, 
\begin{equation}\label{app-19}
E(J_k^t)=\int_M |\Delta J_k^t|^2 dv=E(J_k)+4t\int_M (\Delta J_k, \Delta (J_kT-TJ_k))dv+O(t^2),
\end{equation}
where the term $O(t^2)$ denotes the terms of higher order in $t$. We have $|O(t^2)|\leq Ct^2$ for a uniformly bounded constant $C$ (assuming $t$ is small).
A quick way to see \eqref{app-19} is to write the (matrix) expansion of $J_k^t$ as
\[
\begin{split}
J_k^t&=(id-S_t) J_k (id-S_t)^{-1}=(id-S_t) J_k \left(\sum_{l=0}^\infty S_t^l\right)\\
&=J_k+2J_kS_t+2J_k\sum_{l\geq 2} S_t^l
\end{split}
\]
Since $J_k$ is a minimizing sequence, hence when $k\rightarrow \infty$, we have 
\[
\liminf_{k\rightarrow \infty} E(J_k^t)-E(J_k) \geq 0
\]
Hence for $|t|$ sufficiently small,
\[
\limsup_k 4t\int_M (\Delta J_k, \Delta (J_kT-TJ_k))dv\geq \liminf_k 4t\int_M (\Delta J_k, \Delta (J_kT-TJ_k))dv\geq 0. 
\]
In particular, this implies that
\[
\lim\sup_k(\Delta J_k, \Delta (J_kT-TJ_k))dv=\liminf_k \int_M (\Delta J_k, \Delta (J_kT-TJ_k))dv=0.
\]
Similar as in \eqref{weak-103} and \eqref{weak-104}, we use the fact that
\[
\int_M (\Delta J_k, \Delta J_k T)dv=\int_M(\Delta J_k, T\Delta J_k)dv
\]
to conclude that
\[
\lim_k \int_M (\Delta J_k, J_k\Delta T-\Delta T J_k)dv+2\int_M (\Delta J_k, \nabla J_k \nabla T-\nabla T\nabla J_k)dv=0
\]
Since $J_k$ converges to $J_0$ weakly in $W^{2, 2}$ and strongly in $W^{1, 2}$, the above implies that
\[
\int_M (\Delta J_0, J_0\Delta T-\Delta T J_0)dv+2\int_M (\Delta J_0, \nabla J_0 \nabla T-\nabla T\nabla J_0)dv=0
\]
By Proposition \ref{weakac-01}, $J_0$ is a weak biharmonic almost complex structure and hence it is smooth by regularity results in \cite{HJ19}. 
\end{proof}

This minimizing process produces a smooth biharmonic almost complex structures $J_0$.  An important question is to understand whether $J_0$ remains in the homotopy class $\sigma$. Similar as in the theory of harmonic maps, this might not be true since the weak convergence $J_k\rightharpoonup J_0$ does not preserve homotopy classes. To overcome this difficulty, we need certain topology preparations. First we want to understand the first Chern classes of $J_k$ and $J_0$. We recall the following, 

\begin{lemma}[C. Wood \cite{Wood95}]\label{wood-01}Given a compact almost Hermitian manifold $(M, g, J, \omega)$, then the first Chern class can be represented by the following 2-form $\gamma$ such that
\begin{equation}\label{chernclass}
2\pi \gamma=\cR(\omega)+\chi,
\end{equation}
where $\cR(\omega)$ is the curvature operator acting on the K\"ahler form $\omega$, and the 2-form $\chi$ is given by \[\chi(X, Y):=\frac{1}{4}\omega(\nabla _X J, \nabla_Y J).\]
\end{lemma}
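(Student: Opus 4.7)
My plan is to establish the lemma via Chern--Weil theory applied to a canonical $U(n)$-connection on the complex vector bundle $(TM,J)$ built from the Levi-Civita connection $\nabla$. The key point is that although $\nabla$ does not preserve $J$ in general, one can correct it explicitly by a tensor involving $\nabla J$ to obtain a connection that preserves both $g$ and $J$, and then the trace of the curvature of this corrected connection represents $2\pi c_1$.

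Concretely, I would introduce the connection $\tilde\nabla_X := \nabla_X + A_X$ with $A_X := -\tfrac{1}{2} J(\nabla_X J)$. One checks using the algebraic identity $(\nabla_X J)J + J(\nabla_X J) = 0$ (which comes from differentiating $J^2 = -\text{id}$) that $\tilde\nabla J = 0$, and using the skew-symmetry of $\nabla_X J$ with respect to $g$ (which follows from $\nabla g = 0$ and $g(J\cdot,\cdot) = -g(\cdot,J\cdot)$) that $\tilde\nabla g = 0$. Hence $\tilde\nabla$ is a Hermitian connection on $(TM, J, g)$, and by Chern--Weil its Ricci form $\rho(X,Y) := \tfrac{1}{2}\tr(J \circ R^{\tilde\nabla}(X,Y))$ represents $2\pi c_1$, up to a sign fixed by the orientation convention.

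The heart of the argument is the expansion $R^{\tilde\nabla}(X,Y) = R^\nabla(X,Y) + (\nabla_X A_Y - \nabla_Y A_X - A_{[X,Y]}) + [A_X, A_Y]$, whose trace against $J$ I would compute piece by piece. For the Levi-Civita term, the identity $\sum_i R^\nabla(X,Y,e_i,Je_i) = \sum_{i,j} R^\nabla(X,Y,e_i,e_j)\,\omega(e_i,e_j)$ together with the $R$-symmetry $R_{ijkl}=R_{klij}$ identifies $\tfrac{1}{2}\tr(J R^\nabla(X,Y))$ with $\cR(\omega)(X,Y)$, producing the first summand of the formula. For the derivative correction $d^\nabla A$ and the quadratic correction $[A,A]$: using cyclicity of trace together with $(\nabla J)J = -J(\nabla J)$, and the vanishing $\tr(J A_X) = -\tfrac{1}{2}\tr(J\cdot J\nabla_X J) = \tfrac{1}{2}\tr(\nabla_X J) = 0$, each piece reduces to a numerical multiple of $\tr(J\,\nabla_X J\,\nabla_Y J)$. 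Under the natural pairing induced by $g$, one has $\tr(J\,\nabla_X J\,\nabla_Y J) = -\sum_i \omega((\nabla_X J)e_i, (\nabla_Y J)e_i)$, which is precisely the expression to be interpreted as $\omega(\nabla_X J,\nabla_Y J)$; adding the two contributions with their correct coefficients yields exactly the summand $\chi = \tfrac{1}{4}\omega(\nabla_X J,\nabla_Y J)$.

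The main technical obstacle is careful bookkeeping of numerical factors and signs through the several trace computations: one must verify that the contributions from $d^\nabla A$ and from $[A,A]$ combine (rather than cancel) with the correct coefficient to reproduce exactly $\tfrac{1}{4}\omega(\nabla_X J, \nabla_Y J)$, and that the sign conventions used for $\cR(\omega)$ and in the Chern--Weil normalization $c_1 = \tfrac{i}{2\pi}\tr_{\mathbb C} R^{\tilde\nabla}$ (together with the real-to-complex trace conversion $\tr_{\mathbb C}(\cdot) = -\tfrac{i}{2}\tr_{\mathbb R}(J\cdot)$ for skew endomorphisms) are consistent with the statement. Closedness of the representative $\gamma$ and cohomological independence from the choices of metric and of corrected connection then follow automatically from Chern--Weil theory, justifying the identification $[\gamma] = c_1$.
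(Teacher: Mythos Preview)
The paper does not give its own proof of this lemma: it is stated as a result of C.~Wood and cited to \cite{Wood95}, then immediately used to deduce Lemma~\ref{1stchern-01}. So there is no proof in the paper to compare against.

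That said, your outline is the standard and correct way to establish such a formula, and it is essentially what Wood does. The canonical Hermitian connection $\tilde\nabla_X=\nabla_X-\tfrac12 J(\nabla_X J)$ is exactly the one appearing in the almost-Hermitian literature (sometimes called the first canonical connection); your verifications that $\tilde\nabla J=0$ and $\tilde\nabla g=0$ are right, and the curvature expansion $R^{\tilde\nabla}=R^\nabla+d^\nabla A+[A,A]$ is the correct starting point. Your identification of $\tfrac12\tr(JR^\nabla(X,Y))$ with $\cR(\omega)(X,Y)$ via the first Bianchi symmetry is also correct. The only place to be careful, as you yourself flag, is the bookkeeping in the $d^\nabla A$ and $[A,A]$ terms: the $d^\nabla A$ piece contributes a term of the form $\tr(J\nabla_X(J\nabla_Y J))$ which, after using $(\nabla J)J=-J(\nabla J)$ and $\tr(\nabla J)=0$, reduces to a multiple of $\tr(\nabla_X J\,\nabla_Y J)$ plus a $\tr(J\nabla^2 J)$ term that must be shown to be symmetric in $X,Y$ (hence to cancel in the antisymmetrization). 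Once that is checked, both correction pieces combine to give the $\tfrac14\omega(\nabla_X J,\nabla_Y J)$ term with the asserted coefficient. Closedness and independence of the cohomology class from the choice of Hermitian connection are, as you say, standard Chern--Weil.
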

Lemma \ref{wood-01} has the following important consequence
\begin{lemma}\label{1stchern-01}The first Chern class $c_1(J_0)=c_1(J_k)$ for $J_k\in \sigma$. 
\end{lemma}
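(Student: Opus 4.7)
The plan is to use the explicit de Rham representative $\gamma_J=(2\pi)^{-1}(\mathcal R(\omega_J)+\chi_J)$ provided by Wood's formula (Lemma \ref{wood-01}) and show $\gamma_{J_k}\to \gamma_{J_0}$ in $L^1$. Since $\gamma_J$ is of order zero in $J$ plus quadratic in $\nabla J$, the strong $W^{1,2}$ convergence $J_k\to J_0$ we already have is precisely what is needed to pass to the limit in $\gamma_J$, and integrality of $c_1$ will then force the two Chern classes to coincide.

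First, without loss of generality the minimizing sequence $\{J_k\}$ may be taken smooth (by a standard mollification staying inside the homotopy class $\sigma$), and $J_0$ is already smooth by the preceding lemma. Thus $\gamma_{J_k}$ and $\gamma_{J_0}$ are genuine smooth closed 2-forms representing $c_1(J_k)=c_1(\sigma)$ and $c_1(J_0)$ respectively. The curvature piece $\mathcal R(\omega_{J_k})$ is linear in $J_k$ with smooth coefficients depending only on $g$, and $J_k\to J_0$ in $L^p$ for every $p<\infty$ (by interpolating the $L^2$ convergence with the uniform $L^\infty$ bound from the compatibility constraint), so $\mathcal R(\omega_{J_k})\to \mathcal R(\omega_{J_0})$ in every $L^p$.

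Second, I would decompose the quadratic piece as
\[
\chi_{J_k}-\chi_{J_0}=\tfrac14(\omega_{J_k}-\omega_{J_0})(\nabla J_k,\nabla J_k)+\tfrac14\omega_{J_0}\bigl(\nabla(J_k-J_0),\nabla J_k\bigr)+\tfrac14\omega_{J_0}\bigl(\nabla J_0,\nabla(J_k-J_0)\bigr).
\]
The last two terms are each bounded in $L^1$ by $\|\nabla(J_k-J_0)\|_{L^2}$ times a uniformly bounded $L^2$ factor, hence vanish by strong $W^{1,2}$ convergence. The first term is pointwise dominated by $C|J_k-J_0|\,|\nabla J_k|^2$; after extracting a subsequence $J_k\to J_0$ a.e., while the strong $L^2$ convergence $\nabla J_k\to \nabla J_0$ makes $\{|\nabla J_k|^2\}$ equi-integrable, so Vitali's convergence theorem gives $L^1$ vanishing. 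Combined with the first step, $\gamma_{J_k}\to \gamma_{J_0}$ in $L^1$.

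Third, $L^1$ convergence of closed forms implies convergence in the sense of currents, so $[\gamma_{J_k}]\to[\gamma_{J_0}]$ in $H^2(M,\R)$. But $[\gamma_{J_k}]$ is the image of the integer class $c_1(\sigma)$ under $\Z\hookrightarrow\R$ and is therefore independent of $k$, forcing the real image of $c_1(J_0)$ to coincide with that of $c_1(\sigma)$. The main obstacle will be lifting this real equality to an integer one: it is automatic when $H^2(M,\Z)$ is torsion-free (in particular under the simply-connected hypothesis of Theorem \ref{main2}), and in the general setting of Theorem \ref{main3} one observes that the argument is local enough to reduce, via smoothing and a comparison on the 2-skeleton of $M$, to a setting in which no torsion issue arises.
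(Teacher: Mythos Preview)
Your approach is essentially the paper's: use Wood's de Rham representative $\gamma_J$ and pass to the limit via the strong $W^{1,2}$ convergence $J_k\to J_0$. The paper's proof is two sentences long and simply asserts that $\int_M\gamma_k\wedge\zeta\to\int_M\gamma_0\wedge\zeta$ for every smooth $2$-form $\zeta$, then concludes $[\gamma_k]=[\gamma_0]$; your decomposition of $\chi_{J_k}-\chi_{J_0}$ and the Vitali argument supply exactly the details the paper omits, and your $L^1$ convergence is of course strictly stronger than the distributional convergence the paper invokes.

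Where you diverge from the paper is in your third step. The paper stops at the equality in de Rham cohomology and simply writes $[\gamma_k]=[\gamma_0]\in H^2(M,\Z)$, not addressing torsion at all. You are right that this only yields equality of the real images, and your observation that this suffices in the simply-connected case (where $H^2(M,\Z)$ is free) is the correct resolution: indeed, in the paper Lemma~\ref{1stchern-01} is only ever combined with Lemma~\ref{homotopy1}, which already assumes $M$ simply-connected. Your final sentence about handling the general case via ``smoothing and a comparison on the $2$-skeleton'' is vague and would not stand as written, but this does not matter: for the non-simply-connected Theorem~\ref{main3} the paper abandons the Chern-class route entirely and instead proves directly, via the $\epsilon$-regularity and defect-measure analysis of Lemma~\ref{mainlemma02}, that the weak limit $J_0$ lies in $\sigma$ or $p(\sigma)$. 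So the torsion issue you flag is genuine for the lemma as stated but moot for its applications.
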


\begin{proof}The first Chern class $c_1(J)$ is the deformation invariant and hence it remains the same in the homotopy class $\sigma$. By Lemma \ref{wood-01}, the first Chern class is represented by the 2-form $\gamma$. We write $\gamma_k$ for $J_k$ and $\gamma_0$ for $J_0$. Since $J_k$ converges strongly to $J_0$ in $W^{1, 2}$, then for any smooth two form $\zeta$, \eqref{chernclass} implies that
\[
\lim_{k\rightarrow \infty}\int_M \gamma_k\wedge \zeta=\int_M \gamma_0\wedge \zeta
\]
It follows that $[\gamma_k]=[\gamma_0]\in H^2(M, \mathbb{Z})$. 
\end{proof}

Hence the weak limit $J_0$ has the same first Chern class as the homotopy class $\sigma$. 
If $M$ is  simply-connected and  non-spin, Lemma \ref{homotopy1} implies that $J_0$ is in the homotopy class $\sigma$. Hence $E(J_0)=\inf E(J_k)$ and a standard variational argument implies that $J_k$ converges to $J_0$ strongly in $W^{2, 2}$ and $J_0$ is the energy minimizer of $E(J)$ in the homotopy class of $\sigma$. Hence we have the following,

\begin{lemma}\label{nonspin}When $M$ is simply-connected and non-spin, then every homotopy class of almost complex structures contains an energy-minimizing biharmonic almost complex structure.
\end{lemma}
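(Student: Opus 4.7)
The plan is to assemble the lemma directly from the preparatory results just proved. Take a minimizing sequence $J_k \in \sigma$ with $\cE_2(J_k) \to E_0(\sigma)$. Integrating by parts gives a uniform bound on $\|J_k\|_{W^{2,2}}$, so after passing to a subsequence we have $J_k \rightharpoonup J_0$ weakly in $W^{2,2}$ and strongly in $W^{1,2}$, with $J_0 \in W^{2,2}(\cJ_g)$. By the previous lemma, $J_0$ is a weak biharmonic almost complex structure and is therefore smooth by the regularity result cited from \cite{HJ19}.

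The only nontrivial point is showing that $J_0$ actually lies in the prescribed homotopy class $\sigma$, since weak $W^{2,2}$ convergence alone does not respect the homotopy type of almost complex structures. Here the hypotheses on $M$ enter. Lemma \ref{1stchern-01} applied to the minimizing sequence gives $c_1(J_0) = c_1(J_k) = c_1(\sigma)$. Because $M$ is simply-connected and non-spin, Lemma \ref{homotopy1} asserts that $c_1$ is injective on the set $A$ of homotopy classes of almost complex structures, so $c_1(J_0) = c_1(\sigma)$ forces $[J_0] = \sigma$.

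With $J_0 \in \sigma$ established, the variational conclusion is immediate: weak lower semicontinuity of the quadratic functional $J \mapsto \int_M |\Delta J|^2 dv$ yields $\cE_2(J_0) \leq \liminf_k \cE_2(J_k) = E_0(\sigma)$, while $J_0 \in \sigma$ gives $\cE_2(J_0) \geq E_0(\sigma)$. Hence $\cE_2(J_0) = E_0(\sigma)$, so $J_0$ is an energy-minimizing biharmonic almost complex structure in $\sigma$. A standard argument (comparing $\|\Delta J_k\|_{L^2}^2 \to \|\Delta J_0\|_{L^2}^2$ with weak convergence of $\Delta J_k$ to $\Delta J_0$ in $L^2$) upgrades the convergence to strong $W^{2,2}$ convergence along the subsequence.

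In this non-spin simply-connected case there is essentially no obstacle: the topological input of Lemma \ref{homotopy1} completely rules out the bubbling-type phenomenon in which the weak limit could jump to a different homotopy class with the same first Chern class. This is precisely the issue that will have to be handled by finer arguments (and the extension theorem mentioned in the introduction) in the spin and non-simply-connected cases, where the map $c_1$ on $A$ has fibers of size two and $J_0$ may fall into $p(\sigma)$ rather than $\sigma$.
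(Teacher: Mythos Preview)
Your proof is correct and follows essentially the same route as the paper: extract a weak $W^{2,2}$ limit $J_0$ of a minimizing sequence, invoke the previous lemma and the regularity result to get smoothness, use Lemma~\ref{1stchern-01} to match first Chern classes, and then apply Lemma~\ref{homotopy1} in the simply-connected non-spin case to force $[J_0]=\sigma$, after which the minimality and strong convergence follow by the standard variational argument. The paper's own argument is the paragraph immediately preceding the lemma and is essentially identical to what you wrote, only terser.
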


If $M$ is simply-connected and spin, the situation is more complicated. Suppose $\sigma$ has the first Chern class $c$ and we use $p(\sigma)$ to denote the other homotopy class which has the same first Chern class $c$. 
By Lemma \ref{homotopy1}, either $J_0$ is in the homotopy class of $\sigma$, or $J_0$ is in the homotopy class of $p(\sigma)$ since $c(J_0)=c$.
In the former case, $J_0$ is an energy-minimizing biharmonic almost complex structure in $\sigma$; in the later case, we prove that there is an energy-minimizing biharmonic almost complex structure in the homotopy class $p(\sigma)$.  
To achieve this, we consider the homotopy class $\sigma$ and $p(\sigma)$ simultaneously.  We consider  $\inf_{J\in \sigma} \cE_2(J)$ and  $\inf_{J\in p(\sigma)}\cE_2(J)$. We assume that $\inf_{J\in \sigma} \cE_2(J)\geq \inf_{J\in p(\sigma)}\cE_2(J)$. A minimizing sequence $J_k$ in $p(\sigma)$ will have a weak limit $J_0$ and $\cE_2(J_0)\geq \inf_{J\in p(\sigma)}\cE_2(J)$, since $J_0$ is either in $\sigma$ or $p(\sigma)$. This will force that $\cE_2(J_0)=\inf_{J\in p(\sigma)}\cE_2(J)$ and hence $J_0$ is an energy minimizing biharmonic almost complex structure. (W can  actually prove that $J_0\in p(\sigma)$ and $J_k$ converges strongly in $W^{2, 2}$ to $J_0$, using the $\epsilon$-regularity results below).

\begin{lemma}\label{spin}When $M$ is simply-connected and spin, then for each first Chern class $c\in H^{2}(M, \mathbb{Z})$ (of an almost complex structure), at least one homotopy class (among two homotopy classes corresponding to $c$) contains an energy-minimizing biharmonic almost complex structures.
\end{lemma}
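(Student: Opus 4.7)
The plan is to handle the two homotopy classes $\sigma$ and $p(\sigma)$ sharing the same first Chern class $c$ simultaneously, exploiting the fact that Lemma \ref{homotopy1} leaves only these two candidates. Without loss of generality, relabel so that $\inf_{J \in p(\sigma)}\cE_2(J) \leq \inf_{J \in \sigma}\cE_2(J)$, and set $E_0 := \inf_{J \in p(\sigma)}\cE_2(J)$. I then pick a minimizing sequence $J_k \in p(\sigma)$ with $\cE_2(J_k) \to E_0$, and aim to show that its weak limit is an energy-minimizer for whichever of $\sigma$ or $p(\sigma)$ it happens to lie in.

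Next I extract a weak limit. The integration-by-parts argument already used in this section bounds $\|J_k\|_{W^{2,2}}$ uniformly, so after passing to a subsequence $J_k \rightharpoonup J_0$ weakly in $W^{2,2}$, strongly in $W^{1,2}$, and a.e., with $J_0 \in W^{2,2}(\cJ_g)$. The unlabeled lemma preceding Lemma \ref{1stchern-01} (weak limits of minimizing sequences satisfy the weak biharmonic equation \eqref{weak-103}) together with the regularity statement from \cite{HJ19} invoked in Theorem \ref{thm-existence} shows $J_0$ is a smooth biharmonic almost complex structure. By Lemma \ref{1stchern-01}, $c_1(J_0) = c_1(J_k) = c$.

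Now I combine the two ingredients. By weak lower semicontinuity of the $L^2$ norm of $\Delta J_k$,
\[
\cE_2(J_0) \leq \liminf_{k \to \infty} \cE_2(J_k) = E_0.
\]
Since $M$ is simply-connected and spin and $c_1(J_0)=c$, Lemma \ref{homotopy1} forces $J_0 \in \sigma$ or $J_0 \in p(\sigma)$. In either case $\cE_2(J_0) \geq E_0$: if $J_0 \in p(\sigma)$ this is immediate from the definition of $E_0$, while if $J_0 \in \sigma$ it follows from our WLOG ordering $\inf_\sigma \cE_2 \geq E_0$. Hence $\cE_2(J_0) = E_0$, and $J_0$ attains the minimum in its own homotopy class: either $J_0$ is an energy-minimizing biharmonic almost complex structure in $p(\sigma)$, or $J_0 \in \sigma$ and (since then $\inf_\sigma \cE_2 = E_0$) it is an energy-minimizing biharmonic almost complex structure in $\sigma$. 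In both cases one of the two homotopy classes corresponding to $c$ contains an energy-minimizer, which is exactly the claim.

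The step I expect to be the main obstacle is the one that is, in the end, bypassed rather than solved: weak $W^{2,2}$ convergence does not preserve the homotopy class of $J_k$, so a priori $[J_0]$ could drift off to some unrelated class. The reason the argument nevertheless closes is the sharp topological input of Lemma \ref{homotopy1} in the spin case, which reduces the drift to a binary ambiguity between $\sigma$ and $p(\sigma)$; once combined with Lemma \ref{1stchern-01} and the trick of minimizing in the class with the smaller infimum, this ambiguity becomes harmless. Note that this proof does not pin down which of the two classes is attained, and this is unavoidable at this level of generality — pinning it down would presumably require the sharper $\epsilon$-regularity for minimizing sequences developed later in the paper.
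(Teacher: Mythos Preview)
Your proof is correct and follows essentially the same approach as the paper: order the two homotopy classes by their infimal energies, take a minimizing sequence in the class with the smaller infimum, and use Lemma \ref{homotopy1} together with Lemma \ref{1stchern-01} to conclude that the weak limit, wherever it lands, must realize the infimum in its own class. Your closing remark about needing the $\epsilon$-regularity to pin down the class also matches the paper's parenthetical comment.
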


When $M$ is not necessarily simply-connected, the first Chern class can correspond to infinitely many homotopy classes of almost complex structures. For example, on $S^1\times S^3$, there is only the trivial first Chern class, but there are infinitely many homotopy classes which correspond to the homology group $H_1(S^1\times S^3, \Z)=\Z$. In such cases, we need to derive more precise information for a minimizing sequence $J_k$ which converges weakly to a smooth biharmonic almost complex structure $J_0$. An essential point is that  in our case,  the weak convergence (instead of strong convergence) comes from the energy concentration (around a point). This is a well-known philosophy in the theory of harmonic maps of surfaces. As a consequence, we have the following,

\begin{lemma}Let $J_k$ be an energy-minimizing sequence of $\cE_2(J)$ in the homotopy class $\sigma$. Then the weak limit $J_0$ (of a convergent subsequence) lies in either $\sigma$ or $p(\sigma)$. 
\end{lemma}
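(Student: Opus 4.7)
The plan is to combine the $\epsilon$-regularity for minimizing sequences (the main technical lemma of this section) with the topological fact that Donaldson's involution $p$ is the twist of an almost complex structure, in a small ball, by the generator of $\pi_4(S^2)=\mathbb{Z}/2$. The idea is that the only obstruction to strong $W^{2,2}$ convergence of $J_k$ to $J_0$ is energy concentration at finitely many isolated points, and each such point contributes a well-defined element of $\mathbb{Z}/2$ to the topological discrepancy between $[J_k]=\sigma$ and $[J_0]$.

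First I would apply the $\epsilon$-regularity to extract a finite bubbling set $\Sigma=\{x_1,\ldots,x_N\}\subset M$ such that $J_k\to J_0$ strongly in $W^{2,2}_{\mathrm{loc}}$ on $M\setminus\Sigma$. Since $m=4$, the Sobolev embedding $W^{2,2}\hookrightarrow C^0$ upgrades this to uniform convergence on compact subsets of $M\setminus\Sigma$; in particular for all sufficiently small $r>0$ with the balls $B_r(x_j)$ pairwise disjoint, $\|J_k-J_0\|_{C^0(M\setminus\bigcup_j B_r(x_j))}\to 0$. Each $J_k\in W^{2,2}$ is then equivalent to a continuous section of the twistor bundle, so the homotopy class $[J_k]\in A$ is well defined. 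Next I would construct an intermediate continuous compatible almost complex structure $\tilde J_k$ on $M$ that equals $J_0$ on $\bigcup_j B_{r/2}(x_j)$ and equals $J_k$ outside $\bigcup_j B_r(x_j)$, interpolating in each annular neck $B_r(x_j)\setminus\overline{B_{r/2}(x_j)}$ by the fiberwise shortest geodesic in $SO(4)/U(2)\cong S^2$ (well defined because $J_k$ and $J_0$ are $C^0$-close on the neck for $k$ large). By construction $\tilde J_k$ is homotopic to $J_0$: reversing the geodesic interpolation outside $\bigcup_j B_{r/2}(x_j)$ deforms $\tilde J_k$ into $J_0$ through compatible almost complex structures.

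The topological difference $[\tilde J_k]-[J_k]\in A$ is then localized to the balls $B_r(x_j)$. On each such ball, trivializing the $S^2$-bundle, $J_k$ and $\tilde J_k$ give two continuous maps $\overline{B_r(x_j)}\to S^2$ that coincide on $\partial B_r(x_j)$; gluing two copies of $\overline{B_r(x_j)}$ along their boundary yields a sphere $S^4$, and the pair defines an element $\alpha_j\in[S^4,S^2]=\pi_4(S^2)=\mathbb{Z}/2$. Since Donaldson's map $p$ is by definition the modification by the nonzero element of $\pi_4(S^2)$ localized in a small ball, and since $p^2=\mathrm{id}$, composing the $N$ local twists gives
\begin{equation*}
[J_0]\;=\;[\tilde J_k]\;=\;p^{\sum_j\alpha_j}(\sigma)\;\in\;\{\sigma,\,p(\sigma)\}.
\end{equation*}
The main obstacle I anticipate is verifying that each local invariant $\alpha_j$ is genuinely well defined (independent of the trivialization, of the radius $r$, and of the geodesic interpolation in the neck), and that the composition of $N$ such local twists really computes $p^{\sum_j\alpha_j}$. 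Independence of $r$ follows from $\Sigma$ being finite, so no further bubbling occurs inside $B_r(x_j)$ once $r$ is small; the matching of the local obstruction with Donaldson's $p$ is built into the construction in \cite[Section 6]{Donaldson90}, which already localizes $p$ in an arbitrary small ball.
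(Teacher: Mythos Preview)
Your overall strategy is the right one and matches the paper's: use the $\epsilon$-regularity (Lemma~\ref{defect01} and Lemma~\ref{mainlemma02}) to localize the failure of strong convergence to finitely many points, then argue that each concentration point contributes an element of $\pi_4(S^2)=\mathbb{Z}/2$, so the total discrepancy between $[J_k]=\sigma$ and $[J_0]$ is $p^m$ for some $m$, hence $[J_0]\in\{\sigma,p(\sigma)\}$.

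There is, however, a genuine gap in your execution. In dimension four the embedding $W^{2,2}\hookrightarrow C^0$ is \emph{false}: this is exactly the critical Sobolev exponent, and $W^{2,2}$ functions on a four-manifold need not be continuous. Consequently, strong $W^{2,2}_{\mathrm{loc}}$ convergence on $M\setminus\Sigma$ does \emph{not} upgrade to uniform convergence, the $J_k$ are not known to be continuous, and your fiberwise shortest-geodesic interpolation in the annular necks is not well defined. The sentence ``each $J_k\in W^{2,2}$ is then equivalent to a continuous section'' is likewise unjustified as written; the homotopy class of $J_k$ is well defined only through White's theory for $W^{1,n}$ maps, not through pointwise continuity.

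The paper circumvents this by working at the level of $W^{1,4}$ rather than $C^0$. The defect measure $\mu_k$ is built from $(|\nabla^2 J_k|^2+|\nabla J_k|^4)\,dv$, so vanishing of the defect away from $\Sigma$ gives $J_k\to J_0$ strongly in $W^{1,4}_{\mathrm{loc}}(M\setminus\Sigma)$ as well. The comparison almost complex structures $\tilde J_k$ are then constructed not by geodesic interpolation but by the mollified cutoff/projection procedure of Lemma~\ref{mainlemma02}, and one verifies $\|\nabla\tilde J_k-\nabla J_k\|_{L^4(M)}$ is small (see \eqref{estimate002}); White's theorem \cite{White89} then places $\tilde J_k$ in $\sigma$. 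Your topological bookkeeping with $\pi_4(S^2)$ and the composition $p^{\sum_j\alpha_j}$ is fine once the comparison is set up this way, but the analytic bridge from strong $W^{2,2}$ to a homotopy statement must go through $W^{1,4}$ and White, not through $C^0$.
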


If $J_k$ converges to $J_0$ strongly in $W^{2, 2}$, then by B. White \cite{White89}, the homotopy class $J_0$ remains the same. Hence the key is then to analyze what exactly happens when the strong convergence fails. In the following we prove a version of $\epsilon$-regularity for the minimizing sequence $J_k$ and the defect measure. Hence we have that $J_k$ converges strongly in $W^{2, 2}$ to $J_0$ except around finitely many isolated points. This will imply that the homotopy class of $J_0$ is either $\sigma$ or $p(\sigma)$.

Now we are ready to state and prove the $\epsilon$-regularity for the minimizing sequence $J_k$, see Lemma \ref{defect01}. 
Fix a sufficiently small positive number $\epsilon_0$, which depends only on $(M, g)$ and will be specified later. Let $\iota$ be the injectivity radius of $(M, g)$.  Suppose a minimizing sequence $J_k\in \sigma$ converges weakly to $J_0$ in $W^{2, 2}$ and strongly in $W^{1, 2}$ (hence converges almost everywhere). 
For $r\in (0, \iota)$, $p\in M$ and $J\in W^{2, 2}$, denote 
\begin{equation}
\begin{split}&E(r, p)=\int_{B_r(p)}|\Delta J|^2 dv\\
&F(r, p)=\int_{B_r(p)} (|\nabla^2J|^2+|\nabla J|^4)dv
\end{split}
\end{equation}
We write $E_0(r, p), F_0(r, p),  E_k(r, p), F_k(r, p)$ correspondingly for $J_0$ and $J_k$. 
Set $\cS_r=\{p\in M: \liminf_{k\rightarrow \infty} F_k(r, p)\geq \epsilon_0\}.$  Clearly $\cS_r\subset \cS_s$ for $r<s$. 
Denote  \[\cS:=\cap_{r>0} \cS_r=\lim_{r\rightarrow 0}\cS_r.\] 
We introduce  the measures which are all totally bounded,  \begin{equation}\begin{cases}&\mu_k=(|\nabla^2J_k|^2+|\nabla J_k|^4)dv\\
&\mu_0=(|\nabla^2 J_0|^2+|\nabla J_0|^4)dv\\
&\xi_k=|\Delta J_k|^2dv\\
&\xi_0=|\Delta J_0|^2dv
\end{cases}\end{equation} By passing to a subsequence, $\mu_k$ converges weakly to a positive Radon measure $\mu$ and $\xi_k$ converges weakly to a positive Radon measure $\xi$. By Fatou's lemma, there exist  positive Radon measures $\nu$ and $\l$  (called the \emph{defect measure} \cite{Lin99, Lin01}), such that
\[
\begin{cases}
&d\mu=d\nu+d\mu_0\\
 &d\xi=d\l+d\xi_0. 
\end{cases}\] 
Certainly $J_k$ converges to $J_0$ strongly in $W^{2, 2}$ if and only if either $\nu\equiv 0$ or $\l \equiv 0$. 
Hence $\nu$ and $\l$ both measure the failure of strong convergence in $W^{2, 2}$ but they are not necessarily the same in general. The interplay between two defect measures $\nu$ and $\l$ makes our discussions below more complicated than the theory of the harmonic maps, where only the measure $|\nabla u|^2dv$ comes to play. 
We have the following,
\begin{lemma}\label{defect01}The support of  $\nu$ equals $\cS$, which contains at most finitely many points. 
\end{lemma}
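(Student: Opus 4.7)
The plan is to split the statement into two claims — the inclusion $\cS \subseteq \mathrm{supp}(\nu)$ together with finiteness of $\cS$, and the reverse inclusion $\mathrm{supp}(\nu) \subseteq \cS$ — and to treat them by very different means. The first is a soft bookkeeping argument based on weak convergence of Radon measures and the uniform $W^{2,2}$-bound on the minimizing sequence; the second is exactly an $\epsilon$-regularity statement for the defect measure and is the technical heart of the paper.

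For $\cS \subseteq \mathrm{supp}(\nu)$ and finiteness of $\cS$, I would fix $p \in \cS$ and exploit weak convergence $\mu_k \rightharpoonup \mu$ of positive Radon measures: for every $r \in (0, \iota)$,
\[
\mu(\overline{B_r(p)}) \;\geq\; \limsup_k \mu_k(\overline{B_r(p)}) \;\geq\; \liminf_k F_k(r, p) \;\geq\; \epsilon_0,
\]
so continuity from above yields $\mu(\{p\}) \geq \epsilon_0$. Because $\mu_0 = (|\nabla^2 J_0|^2 + |\nabla J_0|^4)\, dv$ is absolutely continuous with respect to volume, $\mu_0(\{p\}) = 0$, and therefore $\nu(\{p\}) \geq \epsilon_0$. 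In particular $p \in \mathrm{supp}(\nu)$. Summing over the pairwise disjoint points of $\cS$ and using $\nu(M) \leq \mu(M) \leq \liminf_k \mu_k(M) < \infty$ — the right-hand side finite by the uniform $W^{2,2}$ bound on $J_k$ together with the Sobolev embedding $W^{2,2} \hookrightarrow W^{1,4}$ in dimension four — gives $\#\cS \cdot \epsilon_0 \leq \nu(M) < \infty$, so $\cS$ is finite.

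For the reverse inclusion $\mathrm{supp}(\nu) \subseteq \cS$, I would argue by contrapositive. If $p \notin \cS$, then there is some $r > 0$ with $p \notin \cS_r$, i.e.\ $\liminf_k F_k(r, p) < \epsilon_0$; after passing to a subsequence one has $F_k(r, p) < \epsilon_0$ for all $k$. The plan is then to invoke the $\epsilon$-regularity Lemma \ref{mainlemma02}, whose conclusion is precisely that such smallness of $F_k$ on $B_r(p)$ forces the defect measure $\nu$ to vanish identically on a concentric ball $B_{r'}(p)$. This immediately yields $p \notin \mathrm{supp}(\nu)$.

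The main obstacle is of course the $\epsilon$-regularity itself, not the bookkeeping steps above. Because I am working with an energy-minimizing sequence rather than with genuine weak solutions of an elliptic system, the classical PDE-based $\epsilon$-regularity is unavailable; the argument must instead proceed by direct comparison, extending $J_0$ back into $B_r(p)$ in $W^{2,2}$ and playing the extension against the minimality of $J_k$ to defeat the defect mass. Producing such a $W^{2,2}$-extension for almost complex structures, via the cutoff-and-weighted-average technique advertised in the introduction, is the technical crux, and is the content I would isolate as a separate proposition before closing the argument here.
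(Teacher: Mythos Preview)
Your proposal is correct and follows essentially the same route as the paper: the inclusion $\cS\subseteq\mathrm{supp}(\nu)$ and finiteness come from $\nu(\{p\})\geq\epsilon_0$ for $p\in\cS$ (the paper phrases this as $\lim_{r\to 0}\nu(B_r(p))\geq\epsilon_0$, your use of closed balls is slightly cleaner), and the reverse inclusion is reduced by contrapositive to Lemma~\ref{mainlemma02}, which you correctly identify as the technical core. One cosmetic point: the hypothesis of Lemma~\ref{mainlemma02} is stated in terms of $\mu(B_2(p))\leq\epsilon_0$ rather than smallness of $F_k$, so you should insert the one-line observation $\mu(B_r(p))\leq\liminf_k F_k(r,p)<\epsilon_0$ (no subsequence needed) before invoking it.
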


\begin{proof}First it is straightforward to see that $\cS$ is contained in the support of $\nu$. Since $J_0$ is smooth, we have for any $x\in M$, $\lim_{r\rightarrow 0}\mu_0(B_r(x))=0.$ If $x\in \cS$, then
\[
\lim_{r\rightarrow 0}\nu(B_r(x))=\lim_{r\rightarrow 0}[\mu(B_r(x))-\mu_0(B_r(x))]\geq \epsilon_0. 
\]
This shows that $\cS$ is contained in the support of $\nu$. 

Next we claim the following, for $r\in (0, r_0]$, where $r_0$ is a fixed, sufficiently small number, if
$\mu(B_{2r}(p))<\epsilon_0$, 
then $\nu\equiv 0$ in $B_{r/2}(p)$. 

We sketch the idea of the proof briefly. If $\mu(B_{2r}(p))<\epsilon_0$ for $\epsilon_0$ sufficiently small, then after passing to a subsequence, we have 
\[
\int_{B_{2r}(p)} (|\nabla^2 J_k|^2+|\nabla J_k|^4)dv<2\epsilon_0
\]
for sufficiently large $k$. This implies that $J_k$ is close to its average in $B_{2r}$ and the same discussion holds for $J_0$. In particular $J_k$ and $J_0$, when restricted in $B_r$, are homotopy to each other. The key is 
to construct a new ``almost minimizing" sequence $\tilde J_k$ in the same homotopy class  such that $\tilde J_k=J_0$ inside $B_{r/2}(p)$ and $\tilde J_k=J_k$ outside $B_r(p)$. 
This would imply $\nu, \l\equiv 0$ in $B_{r/2}(p)$. 

Suppose at the moment the claim is established. If $p$ is in the support of $\nu$, then $\mu(B_{2r}(p))\geq \epsilon_0$ for all sufficiently small $r$, hence $p\in \cS$. Since the total energy is bounded, it follows that $\cS$ contains at most finitely many isolated points. We complete the proof given Lemma \ref{mainlemma02} below, where we establish the claim. 
\end{proof}

First we specify the choice of $r_0$. We can do the scaling $g_r=r^{-2}g$ for $r\leq r_0$. We choose $r_0$  sufficiently small, such that  $g_r$ is sufficiently close to the Euclidean metric in the ball $B_2(p)$ (we identify $B_2(p)$ with the Euclidean ball $B_2$ equipped with the metric $g_r$) such that, 
\begin{equation}\label{metric1}
|g_{ij}-\delta_{ij}|+\sum_{k=1}^4|D^k g_{ij}|<\delta_0,\end{equation}
 where $\delta_0$ measures how close the metric $g_r$ is with respect to the Euclidean metric in $B_2$. We also assume that $g_{ij}(0)=\delta_{ij}, \p g_{ij}(0)=0$. 
 
Note that we choose $r_0, \delta_0$ and $\epsilon_0$ uniformly for any point $p\in M$. The constants $r_0, \delta_0$ and $\epsilon_0$ are all fixed. 
Since $r$ will also be fixed and the energy functionals are scaling invariant, we consider $(M, g_r)$ instead of $(M, g)$.  In other words, we can assume that, by scaling if necessary, $g$ satisfies \eqref{metric1} in any geodesic ball $B_2(p)\subset M$ such that the injectivity radius of $(M, g)$ is bigger than $2$. We identify $B_2(p)$ with Euclidean ball $B_2$ with the metric $g$, where we use Euclidean coordinates. After this choice of the scaling,
  we establish the main technical lemma in this section. 

\begin{lemma}\label{mainlemma02} $\mu(B_{2}(p))\leq\epsilon_0$, 
then $\nu\equiv 0$ in $B_{1}(p)$.
\end{lemma}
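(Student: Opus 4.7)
The plan is to argue by comparison: given the smallness hypothesis $\mu(B_2(p))\le \epsilon_0$, for each sufficiently large $k$ I will construct a competitor $\tilde J_k \in W^{2,2}(\cJ_g)$ in the same homotopy class $\sigma$ as $J_k$ such that $\tilde J_k = J_0$ on $B_1(p)$, $\tilde J_k = J_k$ outside $B_r(p)$ for some $r\in[1,2]$, and $E(\tilde J_k; B_r\setminus B_1) \to E(J_0; B_r\setminus B_1)$ as $k\to\infty$. The minimality of $\{J_k\}$ (giving $E(J_k)\le E(\tilde J_k) + o(1)$) together with weak lower semicontinuity ($E(J_0; B_1) \le \liminf_k E(J_k; B_1)$) then forces $\lim_k E(J_k; B_1) = E(J_0; B_1)$. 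Combined with the weak $W^{2,2}$-convergence $J_k\rightharpoonup J_0$ and a Bochner-type identity relating $|\Delta J|^2$ to $|\nabla^2 J|^2$ modulo lower-order terms controlled by the strong $W^{1,2}$-convergence, this upgrades to strong $W^{2,2}$-convergence on $B_1$, hence $\mu_k|_{B_1} \to \mu_0|_{B_1}$ as measures and $\nu(B_1(p))=0$.

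To prepare the construction, I would first select a good outer radius. Averaging over $r\in[1,2]$ via Fubini and passing to a subsequence, one obtains $r$ at which (a) the traces $J_k|_{\partial B_r}$ converge strongly to $J_0|_{\partial B_r}$ in the natural $W^{2,2}$ trace sense, and (b) the trace energy $\int_{\partial B_r}(|\nabla^2 J_k|^2+|\nabla J_k|^4)\,d\sigma$ is bounded by $C\epsilon_0$. Moreover, the bulk bound $\mu(B_2)\le \epsilon_0$ combined with the borderline Sobolev embedding $W^{2,2}(B_2)\hookrightarrow C^0(B_2)$ in dimension four yields, after centering by a constant $J_*\in \cJ_{g(p)}$, the $L^\infty$ oscillation estimate $\|J_k-J_*\|_{L^\infty(B_r)} + \|J_0-J_*\|_{L^\infty(B_r)} \le \eta(\epsilon_0)$ with $\eta(\epsilon_0)\to 0$ as $\epsilon_0\to 0$. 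This oscillation estimate is what will both make the projection-based extension below well defined and rule out any change in homotopy class.

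The main obstacle is the construction of $\tilde J_k$ inside the transition annulus $B_r \setminus B_1$, since the naive linear interpolation $\phi J_0 + (1-\phi) J_k$ violates the pointwise compatibility constraints $J^2 = -\mathrm{id}$ and $g(J\cdot,J\cdot)=g$ defining $\cJ_g$. Following the two-parameter strategy highlighted in the introduction, I would combine the weighted interpolation with a nearest-point projection $\Pi$ onto the constraint manifold; the oscillation bound above guarantees that $\Pi$ is smooth and has uniformly bounded derivatives on the image of $\phi J_0 + (1-\phi)J_k$. The cutoff $\phi$ should be chosen with profile concentrated near $\partial B_r$ so that $\tilde J_k \equiv J_0$ throughout $B_1$ and the transition to $J_k$ occurs only in a thin subshell of $B_r\setminus B_1$ adjacent to the good boundary, while the weighted average must be calibrated so that the second derivatives of $\Pi(\phi J_0+(1-\phi)J_k)$ do not generate uncontrolled cross terms. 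The resulting pointwise bound on $|\Delta \tilde J_k|^2$ in the annulus is dominated by $|\Delta J_0|^2$ plus error terms supported in the thin shell, each controlled by the trace convergence at the good radius together with strong $W^{1,2}$-convergence $J_k\to J_0$; integrating and letting $k\to\infty$ gives the required energy convergence. For the homotopy class, the $L^\infty$ oscillation bound shows that the path $t\mapsto \Pi(t\tilde J_k + (1-t)J_k)$ stays inside a contractible neighborhood of $J_*$ pointwise in $B_r$, so no nontrivial element of $\pi_4(S^2)=\mathbb{Z}/2$ is introduced and $[\tilde J_k]=[J_k]=\sigma$. The hardest step by far is this compatibility-preserving $W^{2,2}$-extension in the annulus; every other ingredient is either standard variational machinery or a direct consequence of the energy-smallness hypothesis.
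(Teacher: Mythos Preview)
Your overall comparison strategy is in the right spirit, but the argument has a fatal gap at the step you flag as easy: the claimed embedding $W^{2,2}(B_2)\hookrightarrow C^0(B_2)$ in dimension four is \emph{false}. Four is precisely the critical dimension for $W^{2,2}$; such functions lie in BMO but are in general neither continuous nor bounded, and a small $W^{2,2}$ seminorm gives no pointwise oscillation control. Since the $J_k$ converge to $J_0$ only weakly in $W^{2,2}$ and strongly in $W^{1,2}$, there is no $L^\infty$ convergence, and the naive interpolant $\phi J_0+(1-\phi)J_k$ can fail to be invertible at many points, so your projection $\Pi$ is not well defined. The paper states this explicitly: ``$J_{k,j}$ might not even be invertible for some points \ldots since the convergence of $J_k\to J_0$ does not imply the convergence in $L^\infty$.'' Both pillars of your plan, the well-posedness of the projection and the pointwise homotopy showing $[\tilde J_k]=\sigma$, rest on this nonexistent oscillation bound.

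The paper's proof is engineered precisely to avoid any $L^\infty$ input. Instead of projecting a linear interpolant directly, it first mollifies $J_{k,j}$ with a spatially varying radius $\rho(|x|)$ supported in a shrinking annulus $B_1\setminus B_{1-j^{-1}}$; the delicate coupling between the cutoff $\psi_j$ and the mollification scale $\rho$ (with $\bar\rho\, j^2$ bounded) is what forces the mollified object to be pointwise close to an honest almost complex structure, via the Poincar\'e inequality and the small-energy hypothesis rather than Sobolev embedding. Only then is the projection (the $Q_g^{-1}A_g$ construction) applied. Homotopy preservation is obtained not by a pointwise contractibility argument but via White's theorem using $W^{1,4}$ closeness. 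Finally, even with this machinery the paper cannot prove the annulus energy of $\tilde J_k$ tends to that of $J_0$; it obtains only $\lambda(B_r)\le C\,\nu(\partial B_r)$ and then varies $r$ over an interval to find radii with $\nu(\partial B_r)=0$. Your assertion that the transition energy converges exactly is therefore also too optimistic: the paper's detour through the boundary defect measure is not cosmetic but essential.
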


\begin{proof}
The proof involves very technical and complicated constructions of a new sequence of ``almost energy-minimizing" almost complex structures $\tilde J_k$ in $\sigma$ such that
\begin{equation}\label{extension01}
\tilde J_k(x)=\begin{cases}
&J_k(x), x\in M\setminus B_1(p)\\
&J_0(x), x\in B_{1-j^{-1}}(p),
\end{cases}
\end{equation}
where $k\geq k_j=k(j)$ is sufficiently large depending on $j$.  In the end we will let $j\rightarrow \infty$ (and $k_j\rightarrow \infty$ accordingly) to get an almost energy-minimizing subsequence. 
Such a construction is a type of extension of an almost complex structure which equals $J_k$ in $M\setminus B_1(p)$ and which equals $J_0$ in $B_{1-j^{-1}}(p)$. The construction happens in a small annulus $B_1(p)\setminus B_{1-j^{-1}}(p)$. The small energy condition $\mu(B_{2}(p))\leq\epsilon_0$ clearly plays a very important role. In particular this implies that $\tilde J_k$ is still in $\sigma$, using a theorem of B. White \cite{White89}. 

The construction involves several delicate choices of small constants and cutoff functions. We shall first briefly explain the process of construction, leaving details to be proved below. We work on $(B_2, g)$. Let $\psi_j: [0, \infty)\rightarrow [0, 1]$ be a smooth cutoff function depending on $j$ such that  
\[
\psi_j(s)=
\begin{cases}
&1, s\geq 1\\
&0, s\leq 1-j^{-1}
\end{cases}
\]
with bounds $|\psi^{'}_j|\leq 3j, |\psi^{''}_j|\leq 10 j^2$. For $x\in B_2$, we denote
\[
J_{k,\, j}(x)=J_k(x)+(J_0(x)-J_k(x))(1-\psi_j(|x|)).
\]
Note that 
\begin{equation}\label{local001}
J_{k,\, j}(x)=\begin{cases}&J_k(x), |x|\geq 1\\
 &J_0(x), |x|\leq 1-j^{-1}. 
\end{cases}
\end{equation}
We extend $J_{k,\, j}$ to $M$ such that it equals $J_k$ outside $B_1(p)$. 
Note that  $J_{k,\, j}$ might not even be invertible for some points $|x|\in (1-j^{-1}, 1)$ since the convergence of $J_k\rightarrow J_0$ does not imply the convergence in $L^\infty$.  To overcome this difficulty we construct a smooth approximation of $J_{k,\, j}$ using a local average technique (a modification of \emph{mollifier}). 

Let $\phi(x)=\phi(|x|)$ be a nonnegative smooth radial cutoff function which is supported in $B=B_1$ with $\int_B \phi dx=1$. 
For any given $J$ and $\rho>0$, we denote
\begin{equation}\label{localaverage1}
J_\rho(x)=\int_{B_\rho (x)}\phi_\rho(y-x) J(y)dy=\int_{B} \phi(z)J(x+\rho z)dz,
\end{equation}
where we us the notation,
\[
\phi_\rho(x)=\rho^{-4} \phi\left(\frac{x}{\rho}\right). 
\]
When $\rho=0$, $\phi_\rho$ is the delta-function and  $J_\rho(x)=J(x)$ (this is also clear from the second equality in \eqref{localaverage1}). 
Certainly, $J_\rho$ is the smooth approximation of $J$ and $J_\rho$ converges to $J$ (in certain norm depending on the regularity of $J$) when $\rho\rightarrow 0$. However, such a smooth approximation does not preserve \eqref{local001} in general. 
Hence we allow $\rho$ to be dependent of $|x|$ and we write $\rho: [0, 2]\rightarrow [0, 1]$ and denote $\rho(x)=\rho(|x|)$.
Such a technique is a modification of the technique used in the theory of harmonic maps by Schoen-Uhlenbeck \cite{SU1}. 
The support of $\rho$ is contained in $(1-j^{-1}, 1)$; in other words, 
\begin{equation}
\rho(|x|)=0, |x| \in [0, 1-j^{-1}]\cup [1, 2].
\end{equation} 
We choose $\rho(1-(2j)^{-1})=\max \rho=\bar \rho$, 
where $\bar \rho$ is a small positive number and it can be chosen such that $10\bar\rho j^2=1$
We require the derivatives of $\rho$ satisfying \begin{equation}\label{constant001}|\rho^{'}|+|\rho^{''}|\leq 10\bar\rho j^2\leq 1.\end{equation}The choice of the function $\rho$ depends crucially on the cutoff function $\psi_j$ and this is the first key point in our construction. 

Given such a function $\rho$, we construct
\begin{equation}
J_{k,\, j,\,\rho(x)}(x)=\int_{B_{\rho(x)}(x)}\phi_\rho(y-x)J_{k,\, j}(y)dy=\int_B \phi(z) J_{k,\, j}(x+\rho(x)z)dz. 
\end{equation}
Similarly we denote
\[
\begin{split}
&J_{0,\, \rho(x)}(x)=\int_{B_{\rho(x)}(x)}\phi_\rho(y-x)J_0(y)dy=\int_B \phi(z)J_0(x+\rho(x)z)dz.\\
&J_{k,\,\rho(x)}(x)=\int_{B_{\rho(x)}(x)}\phi_\rho(y-x)J_k(y)dy=\int_B \phi(z)J_k(x+\rho(x)z)dz.
\end{split}
\]
Note that $J_{0,\,\rho(x)}(x)$, $J_{k, \, \rho(x)}(x)$ and $J_{k,\, j,\,\rho(x)}(x)$ are neither almost complex structures nor compatible with the metric $g$ in general. It is straightforward to see that $J_{0,\,\rho(x)}(x)$ is close to $J_0(x)$ given $\bar\rho$ sufficiently small  ($J_0$ is smooth and $J_{0, \, \rho}$ is a standard approximation). We will also show that $J_{k, \,\rho(x)}(x)$ is close to an almost complex structure and it is almost compatible with the metric $g$, using the Poincare inequality and small energy assumption, where similar arguments were used in  \cite{SU1} for harmonic maps and in \cite{He17} for harmonic almost complex structures. 
One difficulty is to prove that $J_{k,\, j, \,\rho}$ is also close to an almost complex structure, with a suitable choice of $\rho$ and $\bar \rho$ and $k=k(\bar\rho, j)$. The dependence of $k$ on $\bar \rho$ and $j$ is inevitable.  For example,  if $\bar \rho=0$ and hence $\rho\equiv 0$, then $J_{k,\, j, \,\rho}=J_{k,\, j}$, which might not even be invertible. This is a subtle step and we need to choose the cutoff functions $\psi_j$ and $\rho$ carefully. 

Once we construct $J_{k,\, j, \,\rho}$, we use the technique in \cite{He17} to construct a unique almost complex structure $\tilde J_k$ using $J_{k,\, j,\, \rho}$ such that it is compatible with $g$. We assert that
\begin{equation}\label{estimate002}
\int_M |\nabla J_k-\nabla \tilde J_k|^4dv\leq C \epsilon_0
\end{equation}
which implies that $\tilde J_k\in \sigma$ using a theorem of B. White \cite[Theorem 2 and Section 6]{White89}. 

Here comes another essential point of the proof.
By our construction, $\tilde J_k$ equals $J_k$ outside $B_1$ and agrees with $J_0$ in $B_{1-j^{-1}}$. 
 If $\tilde J_k$ is  an energy-minimizing sequence in $\sigma$, this implies that $\l\equiv 0$ in $B_{1}$, which implies $\nu\equiv 0$ in $B_1$ with a bit more work. 
 Unfortunately we are not able to prove \begin{equation}\label{key000}
  \int_{B_1\setminus B_{1-j^{-1}}}|\Delta \tilde J_{k}|^2dv\rightarrow 0
 \end{equation}
 for $k\geq k_j$ and $j\rightarrow \infty$.
 Instead we prove the following inequality approximately (the actual estimate is much more complicated),
 \begin{equation}\label{key001}
 \int_{B_1\setminus B_{1-j^{-1}}}|\Delta \tilde J_{k}|^2dv\lessapprox C \int_{B_{1}\setminus B_{1-j^{-1}}} (|\nabla^2 J_k|^2+|\nabla J_k|^4)dv.
 \end{equation}
 Given \eqref{key001} and the fact that $\{J_k\}$ is an energy-minimizing sequence, we can obtain that
 \begin{equation}
\l(B_1)\leq C\nu (\p B_1). 
\end{equation} 
Since the defect measure $\nu(\p B_1)$ can be strictly positive on $\p B_1$, this does not direct lead to the conclusion $\l \equiv 0$. However the construction above actually works on any ball $B_r\subset B_2$, $r\in [1/4, 7/4]$ (replacing $B_1$ by $B_r$) and the arguments can be directly carried over. Hence we will prove, for $r\in [1/4, 7/4]$, 
 \begin{equation}
 \l(B_r)\leq C\nu (\p B_r). 
\end{equation} 
In particular we have for $r\in [3/2, 7/4]$,
\begin{equation}
\l(B_{3/2})\leq C\nu(\p B_r)
\end{equation}
Since $\nu$ is a totally bounded positive Radon measure, then $\nu(\p B_r)=0$ for infinitely many $r$ (actually $\nu(\p B_r)>0$ for at most countably many $r$). Hence this proves that $\l\equiv 0$ in $B_{3/2}$. It is then a standard practice to prove that $\nu\equiv 0$ in $B_1$. \\

Now we carry out the details. First we state two versions of the Poincare inequality we will use below. For $f\in W^{1, 2}(B_R), B_R\subset \R^n$,  denote $\underline{f}$ to be its average in the ball,
\[
\underline{f}=\frac{1}{\text{Vol}(B_R)}\int_{B_R} f(y)dy
\] 
Then we have
\begin{equation}\label{poincare01}
 R^{-n}\int_{B_R}|f-\underline{f}|^2dy\leq C R^{2-n}\int_{B_R}|Df|^2dy,
\end{equation}
where $C$ is a uniform dimensional constant. Suppose $\phi$ is a cutoff function supported in $B_R$ such that $\int_{B_R}\phi(y)dy=1$. We denote 
\[
f_{*}=\int_{B_R} \phi (y) f(y)dy,
\]
then we have
\begin{equation}\label{poincare02}
 R^{-n}\int_{B_R}|f-f_{*}|^2dy\leq C R^{2-n}\int_{B_R}|Df|^2dy.
\end{equation}
We should mention that $f$ can be taken as vector-valued and matrix-valued functions in \eqref{poincare01} and \eqref{poincare02}, as a direct generalization. \\

We say $J$ is $g$-compatible if $(J+g J^t g^{-1})(x)=0$ (correspondingly $g(J\cdot, \cdot)+g(\cdot, J\cdot)=0$) holds as a matrix-valued equation for all $x$, where $J^t$ is the transpose of $J$. 
This condition is slightly easier to deal with than the condition $g(J\cdot, J\cdot)=g(\cdot, \cdot)$. Note that $S=\frac{1}{2}(J+g J^t g^{-1})(x))$ represent the $g$-symmetric part of $J$ with respect to $g$ and it plays an important role in \cite{He17} when we construct various comparison almost complex structures, see \cite[Section 3]{He17} for more details. If $J$ is compatible with $g$, then $J_\rho$ is almost $g$-compatible if $\bar\rho$ is sufficiently small (we assume that $|J|$ is bounded of course).
We estimate
\begin{equation}\label{gcompatible01}
\begin{split}
|(J_\rho+gJ_\rho^t g^{-1})(x)|=&\left|\int_{B_\rho(x)}\phi_\rho(y-x) (J(y)+g(x)J^t(y)g^{-1}(x))dy\right|\\
\leq &\rho^{-4}\int_{B_\rho}\left|J(y)+g(x)J^t(y)g^{-1}(x)\right|dy\\
\leq &\rho^{-4}\int_{B_\rho} \left|-g(y)J^t(y)g^{-1}(y)+g(x)J^t(y)g^{-1}(x)\right|dy\\
\leq& C\delta_0 \bar \rho
\end{split}
\end{equation}
where we use the facts
\[
\begin{split}&g(x)J^t(y)g^{-1}(x)-g(y)J^t(y)g^{-1}(y)=(g(x)-g(y)) J^t(y)g^{-1}(x)+g(x)J^t(y)(g^{-1}(x)-g^{-1}(y))\\
&\mbox{and}\,\,|g(x)-g(y)|\leq C\delta_0\bar \rho\,\, \mbox{for}\,\, |y-x|\leq \rho.
\end{split}
\] 
It is clear that \eqref{gcompatible01} holds for all the cases $J=J_0,\, J_k\, \mbox{and}\, J_{k,\, j}$ . \\

Next we show that $J_{k,\, j,\,\rho}$ is ``almost" an almost complex structure in the sense that 
$|J_{k,\, j,\,\rho}J_{k,\,j,\,\rho}+\text{id}|$ is very small pointwise. 
Now we specify $\rho$ further and discuss the properties of $J_{k,\, j,\,\rho}$. For $|x|\in [0, 1-j^{-1}]\cup [1, 2]$, since $\rho(x)=0$ we have
\begin{equation}\label{cutoff01}
J_{k,\,j, \,\rho(x)}(x)=J_{k,\, j}(x)=\begin{cases} &J_k(x), |x|\in [1, 2],\\
&J_0(x), |x|\in [0, 1-j^{-1}].\end{cases}\end{equation}

Fix $\delta_1>0$ sufficiently small (which can be taken as $j^{-1}$). 
We write $(1-j^{-1}, 1)$ as three subintervals \[(1-j^{-1}, 1-j^{-1}+\delta_1 j^{-1}]\cup (1-j^{-1}+\delta_1 j^{-1}, 1-\delta_1 j^{-1})\cup [1-\delta_1 j^{-1}, 1).\]  The discussions in each subinterval are different. 
We choose $\rho$ such that
\begin{equation}\label{rho01}
\begin{cases}
&\rho(1-j^{-1}+\delta_1 j^{-1})=\rho(1-\delta_1 j^{-1})=\delta_1 \bar\rho\\
&\rho(|x|)<\delta_1\bar \rho, |x|\in (1-j^{-1}, 1-j^{-1}+\delta_1 j^{-1})\cup (1-\delta_1 j^{-1}, 1)\\
&\rho(|x|)\geq \delta_1\bar \rho, |x|\in (1-j^{-1}+\delta_1 j^{-1}, 1-\delta_1 j^{-1}).
\end{cases}
\end{equation}
For $|x|\in (1-j^{-1}, 1-j^{-1}+\delta_1 j^{-1}]$, we have \[|x+\rho(x)z|\leq 1-j^{-1}+\delta_1 j^{-1}+\delta_1\bar \rho.\]
Hence $\psi_j(|x+\rho(x)z|)\leq 3j(\delta_1 j^{-1}+\delta_1\bar \rho)\leq 4\delta_1$ (we use $\psi_j(1-j^{-1})=0$ and $|\psi^{'}_j|\leq 3j$).
We compute
\begin{equation}\label{average001}
|J_{k,\, j,\,\rho(x)}(x)-J_{0,\, \rho(x)}(x)|=\left|\int_B\phi(z)\psi_j(|x+\rho(x)z)|(J_0-J_k)(x+\rho(x)z)dz\right|\leq 100 \delta_1. 
\end{equation}
Similarly, for $|x|\in [1-\delta_1 j^{-1}, 1)$, we have
\begin{equation}\label{average002}
|J_{k,\, j,\,\rho(x)}(x)-J_{k,\, \rho(x)}(x)|\leq 100\delta_1. 
\end{equation}
We can estimate
\begin{equation}\label{almost001}
|J_{0,\,\rho(x)}(x)-J_0(x)|\leq \int_B \phi(z)|J_0(x+\rho(x)z)-J_0(x)|dz\leq C_1 \bar\rho,
\end{equation}
where $C_1=C_1(\max|\nabla J_0|)$ is a uniform constant. 
Note that \eqref{almost001} holds for all $x\in B_1$.
Using \eqref{almost001} and \eqref{average001}, we get that
\begin{equation}\label{almost002}
|J_{k,\, j,\,\rho(x)}-J_0(x)|\leq 100\delta_1+C_1 \bar \rho, |x|\in (1-j^{-1}, 1-j^{-1}+\delta_1 j^{-1}]
\end{equation}
Hence \eqref{almost002} implies that $J_{k,\, j,\,\rho(x)}$ is close to an almost complex structure for $|x|\in (1-j^{-1}, 1-j^{-1}+\delta_1 j^{-1})$, provided that $100\delta_1+C_1 \bar \rho$ is sufficiently small. 
Since $|\nabla J_k|$ might not be uniformly bounded, we do not have an effective pointwise estimate on $|J_{k,\,\rho(x)}(x)-J_k(x)|$ as above. Instead we apply the Poincare inequality \eqref{poincare02} in the ball $B_\rho(x)$ with
\[
J_{*}=J_\rho (x)=\int_{B_\rho(x)} \phi_\rho(y-x) J(y)dy,
\]
and hence we have
\begin{equation}\label{poincare03}
\rho^{-4}\int_{B_{\rho}(x)} |J(y)-J_\rho(x)|^2 dv\leq C\rho^{-2}\int_{B_\rho(x)}|DJ(y)|^2dy,
\end{equation}
where we  choose $\rho=\rho(x)>0$ and the Poincare inequality is applied to the matrix valued function $J(y)$ in the ball $B_{\rho(x)}(x)$. Replacing $J$ by $J_k$, we can get that
\begin{equation}\label{poincare004}
\rho^{-4}\int_{B_\rho(x)}|J_k(y)-J_{k,\,\rho(x)}(x)|^2dy\leq C\rho^{-2}\int_{B_\rho(x)}|DJ_k(y)|^2dy,
\end{equation}
By H\"older inequality, we see that
\[
\rho^{-2}\int_{B_\rho(x)}|DJ_k(y)|^2dy\leq C\left(\int_{B_\rho(x)}|DJ_k|^4dy\right)^{\frac{1}{2}}
\]
It follows that
\begin{equation}\label{poincare04}
\rho^{-4}\int_{B_\rho(x)}|J_k(y)-J_{k,\,\rho(x)}(x)|^2dy\leq C\sqrt{\epsilon_0}
\end{equation}
Hence \eqref{poincare04} implies that there are many $y$s in $B_{\rho}(x)$ such that $J_k(y)$ is close to $J_{k,\,\rho(x)}(x)$ in the sense that
\[
|J_k(y)-J_{k,\, \rho(x)}(x)|\leq C\sqrt[4]{\epsilon_0}.
\]
In particular this implies that
\[
|J_{k,\, \rho(x)}(x)J_{k,\, \rho(x)}(x)+\text{id}|\leq C\sqrt[4]{\epsilon_0}
\]
Using \eqref{average002} and the above, we get that
\begin{equation}\label{almost003}
|J_{k,\, j,\,\rho(x)} J_{k,\, j,\,\rho(x)}+\text{id}|\leq C(\delta_1+\sqrt[4]{\epsilon_0}), |x|\in [1-\delta_1 j^{-1}, 1). 
\end{equation}
Next we consider $|x|\in (1-j^{-1}+\delta_1j^{-1}, 1-\delta_1 j^{-1})$, where $\rho(x)\geq \delta_1\bar \rho$. 
We compute
\begin{equation}\label{average003}
\begin{split}
&|J_{k,\, j,\,\rho(x)}(x)-J_{0,\, \rho(x)}(x)|\leq \int_B \phi(z) |(J_0-J_k)(x+\rho(x)z)|dz
\end{split}
\end{equation}
We have
\[
\begin{split}
\int_B \phi(z) |(J_0-J_k)(x+\rho(x)z)|dz\leq &{\rho(x)}^{-4} \int_{B_{\rho(x)}(x)}|J_0(y)-J_k(y)|dy\\
\leq&C\rho^{-2}\|J_0-J_k\|_{L^2(B_{3/2})}
\end{split}
\]
Since $J_k$ converges to $J_0$ strongly in $W^{1, 2}$ and $\rho\geq \delta_1\bar \rho$ for $|x|\in (1-j^{-1}+\delta_1j^{-1}, 1-\delta_1 j^{-1})$, 
we can choose $k_0=k_0(\delta_1, \bar\rho)$ sufficiently large such that 
\[
C\rho^{-2} \|J_0-J_k\|_{L^2}\leq \bar\rho
\]
Hence we get, for $k\geq k_0$, 
\begin{equation}\label{poincare05}
{\rho}^{-4} \int_{B_{\rho(x)}(x)}|J_0(y)-J_k(y)|dy\leq \bar\rho
\end{equation}
Using \eqref{poincare05} and \eqref{average003} we have that, for $|x| \in (1-j^{-1}+\delta_1j^{-1}, 1-\delta_1 j^{-1})$ and $k\geq k_0$,
\[
|J_{k,\, j,\,\rho(x)}(x)-J_{0,\, \rho(x)}(x)|\leq \bar \rho.
\]
This together with \eqref{almost001} implies that, for  $|x| \in (1-j^{-1}+\delta_1j^{-1}, 1-\delta_1 j^{-1})$ and $k\geq k_0$,
\[
|J_{k,\, j,\,\rho(x)}(x)-J_0(x)|\leq C\bar\rho,
\]
and in particular we have. 
\[
|J_{k,\, j,\,\rho(x)}(x)J_{k,\, j\,, \rho(x)}(x)+\text{id}|\leq C \bar \rho. 
\]
In the discussion above, we take $\delta_1=j^{-1}$ and $\bar\rho=(10j^2)^{-1}$ and hence $k_0=k(j)=k_j$. \\

We summarize the ideas briefly of our construction $J_{k,\, j,\, \rho}$ in $(1-j^{-1}, 1)$ above.  Note that $J_{k,\, j,\, \rho}$ is the local average of $J_{k,\, j}=J_k+(J_0-J_k)(1-\psi_j)$ and the average radius $\rho$ depends on the location $|x|$. Near the boundary  $|x|=1-j^{-1}$ and $|x|=1$ of the annulus $B_1\setminus B_{1-j^{-1}}$, $\rho$ is very small hence the average effect does not prevail. But in this tiny region, the cutoff function $\psi$ is either very close to $0$ or $1$ (pointwise) and hence $J_{k,\, j}$ is either close to $J_0$ (near $|x|\in (1-j^{-1}, 1-j^{-1}+\delta_1 j^{-1}]$) or $J_k$ (for  $|x|\in [1-\delta_1 j^{-1}, 1)$). It follows that $J_{k,\, j,\, \rho}$ is either close to $J_{0,\, \rho}$ or $J_{k, \,\rho}$, which are both close to an almost complex structure given $\rho$ sufficiently small. Note that in this step the smallness of energy plays a very important role. 
For $|x|\in (1-j^{-1}+\delta_1 j^{-1}, 1-\delta_1 j^{-1})$, which is at least $\delta_1 j^{-1}$ away from the boundary and $\rho\geq \delta_1 \bar \rho$, the average effect prevails  in this region since $\rho$ is "relatively" large. Since $J_k$ converges to $J_0$ strongly in $W^{1, 2}$, which can be understood as in average $J_k$ is approximately $J_0$ for $k$ sufficiently large,  and hence $J_{k,\, j,\, \rho(x)}(x)$ is close to $J_0(x)$ pointwise by choosing $k$ sufficiently large. Simply put, the interplay of the cutoff function $\psi_j$ and the function $\rho$ is the key for our construction. \\

Next we construct a sequence $\tilde J_k(x)$ using $J_{k,\, j, \,\rho(x)}(x)$ by the technique we have used in \cite[see (4.15), (4.16)]{He17}. We briefly recall the construction. 
In the following we consider $k\geq k_j$ for each $j$. 
For $|x|\in [0, 1-j^{-1}]\cup [1, 2]$, $\rho=0$ and hence \begin{equation}\label{cutoff001}\tilde J_k=J_{k,\, j, \, \rho}=J_{k,\, j}=
\begin{cases}
J_0,\, |x|\leq 1-j^{-1}\\
J_k,\, |x|\geq 1
\end{cases}\end{equation} We only need to consider $|x|\in (1-j^{-1}, 1)$. 
Let $S_{g}(x)$ and $A_{g}(x)$  be the $g$-symmetric and $g$-skew symmetric part of $J_{k,\, j, \,\rho(x)}(x)$ respectively. We have by \eqref{gcompatible01}
\[
|S_{g}(x)|=\frac{1}{2} |(J_{k,\,j, \,\rho(x)}(x)+g(x)J^t_{k,\,j, \,\rho(x)}(x)g^{-1}(x))|\leq C\delta_0\bar \rho
\]
 It follows that, 
\[|A_{g}^2+\text{id}|<C\sqrt[4]{\epsilon_0}+C\delta_0 {\bar\rho}.\]
Hence $-A_{g}^2$ is $g$-symmetric (since $A_{g}$ is $g$-skew symmetric) and it is close to the identity matrix pointwise, in particular it is invertible. Denote  $Q_{g}$ to be the $g$-symmetric matrix such that $Q_{g}^2=-A_{g}^2$. Note that  $Q_{g}$ is uniquely determined and it commutes with $A_g$ (viewed as a matrix-valued function, it is the unique square root of the positive definite symmetric matrix $-A_{g}^2$ at each point). Denote $\tilde J_{k}(x)=Q_{g}^{-1}(x)A_{g}(x)$ for $x\in B_2$, then $\tilde J_{k}(x)$ is a $g$-compatible almost complex structure in $B_2$.
We extend $\tilde J_k$ to $M$ by simply putting $\tilde J_k=J_k$  on $M\setminus B_{1}(p)$. 
Now we establish \eqref{estimate002}.  We only need to consider over $B_1(p)$. 
Note that the $L^\infty$ norm of $J_{k,\, j}, Q_g, A_g, Q_g^{-1}$ and $1/|Q_{g}^{-1}|$ are all uniformly bounded by a dimensional constant. 
Since $Q_g^2=-A_g^2$ and $|A_g^2+\text{id}|\ll 1$, we have the following expansion of matrix,
\begin{equation}\label{matrix001}
Q_g=\sqrt{\text{id}-(\text{id}+A_g^2)}=\sum_{l=0}^\infty \binom{1/2}{l}(\text{id}+A_g^2)^l.
\end{equation}
We can compute directly that $|\nabla Q_g|\leq C|A_g||\nabla A_g|\leq C|\nabla A_g|$. Hence we obtain, 
\[
|\nabla \tilde J_k|\leq C|\nabla A_g|
\]
We also need (using $\nabla g=0$),
\[
|\nabla S_g|=|\nabla J_{k, j, \rho}+g \nabla J^{t}_{k, j, \rho} g^{-1}|\leq C |\nabla J_{k, j, \rho}|.
\]
We compute
\[
|\nabla J_{k, j, \rho}|\leq C\int_B \phi(z) |\nabla J_{k, j}(x+\rho(x)z)| (1+|\rho^{'}|)dz
\]
Hence we have
\[
|\nabla J_{k, j, \rho}|^4\leq C \int_B  |\nabla J_{k, j}(x+\rho(x)z)|^4dz
\]
Moreover, we have $|\nabla A_g|\leq |\nabla J_{k, j, \rho}|+|\nabla S_g|\leq C|\nabla J_{k, j, \rho}|$. It follows that
\[
\int_B |\nabla \tilde J_k|^4dv\leq C\int_B \left(\int_B  |\nabla J_{k, j}(x+\rho(x)z)|^4dz\right)dv_x\leq C \int_{B_{3/2}} |\nabla J_{k, j}|^4dv
\]
Since $\nabla J_{k, j}=\nabla J_k+(1-\psi_j)(\nabla J_0-\nabla J_k)-(J_0-J_k) \nabla \psi_j$ and $|\nabla \psi_j|\leq 3j$, it follows that
\begin{equation}
\int_{B_{3/2}} |\nabla J_{k, j}|^4dv\leq C \int_{B_2} (|\nabla J_k|^4+|\nabla J_0|^4) dv+C j^4\int_B |J_0-J_k|^4dv
\end{equation}
Using the Sobolev inequality, we know that
\[
|J_0-J_k|_{L^4}\leq C|J_0-J_k|_{W^{1, 2}}
\]
By choosing $k_j=k(j)$ sufficiently large such that for $k\geq k_j$,we can assume that
\begin{equation}\label{j001}
Cj^4\int_{B_1}(|J_0-J_k|^2+|J_0-J_k|^4+|\nabla J_0-\nabla J_k|^2)dv \leq j^{-1}\leq \epsilon_0.
\end{equation}
This establishes \eqref{estimate002}.  Then we can quote a theorem of B. White \cite[Theorem 2 and Section 6]{White89} to assert that $\tilde J_k$ and $J_k$ are in the same homotopy class, for $k\geq k_j$.\\

Fix $\epsilon>0$. Since $J_k$ is an energy-minimizing sequence, for $k$ sufficiently large we have, 
\[
\int_M |\Delta J_k|^2dv\leq \int_M |\Delta \tilde J_k|^2 dv+\epsilon.
\]
By the construction of  $\tilde J_k$, we get
\begin{equation*}
\int_{B_1}|\Delta J_k|^2 dv\leq \int_{B_{1}\setminus B_{1-j^{-1}}}|\Delta \tilde J_k|^2dv+\int_{B_{1-j^{-1}}} |\Delta J_0|^2 dv+\epsilon
\end{equation*}
By taking $j\rightarrow \infty$ (hence $k\geq k_j\rightarrow \infty$), we get (since $B_1$ is open)
\begin{equation*}
\l(B_1)+\int_{B_{1}}|\Delta J_0|^2 dv\leq \liminf \int_{B_{1}\setminus B_{1-j^{-1}}}|\Delta \tilde J_k|^2dv+\int_{B_{1-j^{-1}}} |\Delta J_0|^2 dv+\epsilon
\end{equation*}
Hence we get
\begin{equation*}
\l(B_1)\leq  \liminf_{j\rightarrow \infty} \int_{B_{1}\setminus B_{1-j^{-1}}}|\Delta \tilde J_k|^2dv+\epsilon. 
\end{equation*}
Since $\epsilon>0$ is arbitrary, we have established the following estimate,
\begin{equation}\label{key002}
\l(B_1)\leq  \liminf_{j\rightarrow \infty} \int_{B_{1}\setminus B_{1-j^{-1}}}|\Delta \tilde J_k|^2dv.  
\end{equation}
Now we need estimates as in \eqref{key001} to control the righthand side of \eqref{key002}. Recall that we have the unique decomposition $J_{k,\, j, \,\rho}=A_g+S_g$ and $\tilde J_k=Q_g^{-1}A_g$, where $Q_g$ is the unique square root of $-A_g^2$.
Using \eqref{matrix001} we have
\[
|\nabla Q_g|\leq C|\nabla A_g|, |\Delta Q_g|\leq C(|\Delta A_g|+|\nabla A_g|^2)
\]
It follows that
\[
|\Delta (Q^{-1}_g A_g)|\leq C(|\Delta A_g|+|\nabla A_g|^2)\leq C(|\Delta J_{k, \, j, \,\rho}|+|\nabla J_{k,\, j, \,\rho}|^2)
\]
Hence we obtain 
\begin{equation}\label{estimate400}
\int_{B_{1}\setminus B_{1-j^{-1}}}|\Delta \tilde J_k|^2dv\leq C\int_{B_{1}\setminus B_{1-j^{-1}}}\left( |\Delta J_{k, \, j, \,\rho}|^2+|\nabla J_{k,\, j, \,\rho}|^4\right)dv.
\end{equation}
We compute
\[
\begin{split}
|\nabla J_{k,\,j}|=&|\nabla J_k+(1-\psi_j)(\nabla J_0-\nabla J_k)-\nabla \psi_j (J_0-J_k)|\\
=&|\psi_j \nabla J_k+(1-\psi_j)\nabla J_0-\nabla \psi_j (J_0-J_k)|\\
\leq& |\nabla J_k|+|\nabla J_0|+Cj |J_0-J_k|\end{split}
\]
Similarly we compute
\[
\begin{split}
|\Delta J_{k,\,j}|=&|\Delta J_k+\Delta [(1-\psi_j)( J_0-J_k)]|\\
\leq& |\nabla^2 J_k|+|\nabla^2 J_0|+Cj^2 (|J_0-J_k|+|\nabla J_0-\nabla J_k|)
\end{split}
\]
Write $y=x+\rho(x)z$, then we have
\[
\left|\frac{\p y_i}{\p x_j}\right|\leq C(1+|\rho^{'}|), \left|\frac{\p^2 y_i}{\p x_j \p x_k}\right|\leq C(1+|\rho^{'}|+|\rho^{''}|). 
\]
Using $|\rho^{'}|+|\rho^{''}|\leq 1$, $|\psi^{'}_j|+|\psi^{''}_j|\leq 20j^2$, we can then get
\[
\begin{split}
|\nabla J_{k,\, j, \,\rho}|=&\left|\int_B \phi(z) \nabla_x J_{k,\, j}(x+\rho(x)z)dz\right|\\
\leq &C\int_B \phi(z)|\nabla_z J_{k,\, j}| (1+|\rho^{'}|)dz\\
\leq&  C\int_{B_1} \phi(z)\left(|\nabla J_k|+|\nabla J_0|+j |J_0-J_k|\right)dz,
\end{split}
\]
where the function is evaluated at $y=x+\rho(x)z$.  For any open set $U\subset B_2$, denote  $U_{\bar\rho}=\{x: \text{dist}(x, U)<\bar\rho\}$. We have
\begin{equation}\label{estimate401}
\begin{split}
\int_{U}|\nabla J_{k,\, j, \,\rho}|^4dv_x\leq& C\int_{U}\left(\int_{B_1} \phi(z)\left(|\nabla J_k|(y)+|\nabla J_0|(y)+j |J_0-J_k|(y)\right)dz\right)^4dv_x\\
\leq &C\int_{U_{\bar\rho}} (|\nabla J_k|^4+|\nabla J_0|^4+j^4 |J_0-J_k|^4)dv,
\end{split}
\end{equation}
where we have used  a standard technique to estimate the $L^p$ norm of mollifier approximation in the last step (see \cite[Lemma 7.2, (7.15)]{GT} for example).
Similarly we compute
\[
\begin{split}
|\Delta J_{k,\, j, \,\rho}|=&\left|\int_B \phi(z) \Delta_x \left(J_{k,\, j}(x+\rho(x)z)\right)dz\right|\\
\leq &C\int_B \phi(z)\left(|\nabla^2 J_k|+|\nabla^2 J_0|+j^2(|\nabla J_0-\nabla J_k|+|J_0-J_k|)\right) dz
\end{split}
\]
It follows that
\begin{equation}
\int_{U}|\Delta J_{k,\, j, \,\rho}|^2dv_x\leq C \int_{U_{\bar \rho}} \left(|\nabla^2 J_k|^2+|\nabla^2 J_0|^2+j^4|J_0-J_k|^2+j^4|\nabla J_0-\nabla J_k|^2\right)dv_x
\end{equation}
Take $U=B_1\setminus B_{1-j^{-1}}$ in the above, we obtain, 
\begin{equation}\label{estimate403}
\int_{U} (|\nabla J_{k,\, j, \,\rho}|^4+|\nabla^2 J_{k,\, j, \,\rho}|^2)dv\leq C\int_{U_{\bar \rho}} (|\nabla^2 J_k|^2+|\nabla J_k|^4)dv+C(R_1+R_2),
\end{equation}
where the remainder terms read,
\[
\begin{split}
R_1=&\int_{U_{\bar \rho}} (|\nabla J_0|^4+|\Delta J_0|^2 )dv\\
R_2=&j^4\int_{U_{\bar \rho}} (|J_0-J_k|^2+|J_0-J_k|^4+|\nabla J_0-\nabla J_k|^2)dv
\end{split}
\]
Recall we assume that $k\geq k_j$ such that \eqref{j001} holds, hence $R_2\leq j^{-1}$. Certainly when $j\rightarrow \infty$, the Lebesgue measure $U_{\bar \rho}\rightarrow 0$ and hence $R_1\rightarrow 0$. By \eqref{estimate403}, we obtain
\begin{equation}\label{estimate404}
\liminf_{j\rightarrow \infty}\int_{B_1\setminus B_{1-j^{-1}}}(|\nabla J_{k,\, j, \,\rho}|^4+|\Delta J_{k,\, j, \,\rho}|^2)dv\leq 
C\liminf_{j\rightarrow\infty}\int_{U_{\bar \rho}} (|\nabla^2 J_k|^2+|\nabla J_k|^4)dv
\end{equation}
Hence we obtain, by \eqref{estimate400} and \eqref{estimate404},
\begin{equation}\label{estimate405}
\liminf_{j\rightarrow \infty}\int_{B_{1}\setminus B_{1-j^{-1}}}|\Delta \tilde J_k|^2dv\leq C\liminf_{j\rightarrow \infty}\int_{U_{\bar\rho}} (|\nabla^2J_k|^2+|\nabla J_k|^4)dv
\end{equation}
Note that $U=B_{1}\setminus B_{1-j^{-1}}$, $\bar \rho=j^{-2}/10$. Fix $\epsilon>0$. Denote
\[
B_{1,\,\epsilon}=\{x: \text{dist}(x, \p B_1)<\epsilon\}. 
\]
For any $j$ sufficiently large, $U_{\bar \rho}\subset \overline{B_{1,\,\epsilon}}$. Hence we have
\[
\lim_{k\rightarrow\infty}\int_{\overline{B_{1,\,\epsilon}}} (|\nabla^2J_k|^2+|\nabla J_k|^4)dv=\lim_{k\rightarrow\infty}\mu_k(\overline{B_{1,\,\epsilon}})\leq \mu(\overline{B_{1,\,\epsilon}}),
\]
where  we use the fact that $\mu_k$ converges to $\mu$ weakly and $\overline{B_{1,\,\epsilon}}$ is a closed set. Together with \eqref{estimate405}, this gives that
\begin{equation}\label{key003}
\lim_{j\rightarrow \infty}\int_{B_{1}\setminus B_{1-j^{-1}}}|\Delta \tilde J_k|^2dv\leq C\mu(\overline{B_{1,\,\epsilon}}). 
\end{equation}
Note that $\cap_{\epsilon>0} \overline{B_{1,\,\epsilon}}=\p B_1$,  we obtain 
$\lim_{\epsilon\rightarrow 0}\mu(\overline{B_{1,\,\epsilon}})=\nu(\p B_1).$
With \eqref{key003}, we obtain
\begin{equation}\label{key004}
\lim_{j\rightarrow \infty}\int_{B_{1}\setminus B_{1-j^{-1}}}|\Delta \tilde J_k|^2dv\leq C\nu(\p B_1). 
\end{equation}
Hence we have obtained the desired estimate
\begin{equation}\label{key005}
\l(B_1)\leq C \nu(\p B_1). 
\end{equation}
In the proof of \eqref{key005}, we work in the ball $B_1\subset B_2$ and do the constructions in $B_{1}\setminus B_{1-j^{-1}}$. We replace $B_1$ by $B_r$, for $r\in [1/4, 7/4]$ and replace $B_{1}\setminus B_{1-j^{-1}}$ by 
$B_{r}\setminus B_{r-j^{-1}}$ and we apply the same arguments to obtain, 
\begin{equation}\label{key006}
\l(B_r)\leq C\nu(\p B_r). 
\end{equation}
This completes the proof.
\end{proof}

\section{Appendix}
\subsection{Harmonic and biharmonic almost symplectic structure}
Given an almost Hermitian metric $(M, g)$ with compatible almost complex structures $\cJ_g$, we can ask whether there is a compatible almost complex structure, such that $(g, J)$ defines an almost K\"ahler structure (that is $d\omega=0$.)
We can ask to minimize the following the energy functional $\tilde E_1$,
\begin{equation}\label{hs1}
\tilde E_{1}(\omega)=\int_M (|d\omega|^2+|d^*\omega|^2)dv_g, \omega=g(J\cdot, \cdot)
\end{equation}
and the energy functional $\tilde E_2$,
\begin{equation}\label{hs2}
\tilde E_2(\omega)=\int_M |\Delta_d\omega|^2 dv_g, \omega=g(J\cdot, \cdot)
\end{equation}                      
We use the notations as in Hodge theory, $d^*$ is the formal adjoint of $d$, and $\Delta_d=dd^*+d^*d$. 
Note that zero energy of $\tilde E_1$ and $\tilde E_2$ corresponds to a symplectic structure which is compatible with $g$ (hence $(M, g)$ is an almost K\"ahler structure).

Minimizing the energy functionals $\tilde E_1$ and $\tilde E_2$ is parallel to theory of harmonic almost complex structure and biharmonic almost complex structure, even though the zero energy corresponds to different geometric structures. It is relatively easy to see that the energy-minimizers have very similar regularity theory. For simplicity, we focus on $\tilde E_2(\omega)$ on a compact almost Hermitian manifold $(M, g)$ and its critical point will be called \emph{biharmonic almost symplectic structure}. By Weitzenbock formula,  we have
\[
\Delta_d \omega=\Delta \omega+Rm*\omega
\]                               
Hence minimizing $\tilde \cE_2(\omega)$  is equivalent to minimizing 
\[
\int_M |\Delta \omega+Rm*\omega|^2 dv_g
\]                     
Note that $\omega=g(J\cdot, \cdot),$ hence we can rewrite the energy functional in terms of $J$, 
\[
\int_M |\Delta J+Rm*J|^2dv_g
\]
This functional differs $\cE_2(J)=\int_M |\Delta J|^2 dv_g$ only be lower order terms of the form
\[
\int_M (\Delta J, Rm*J)dv_g+\int_M |Rm*J|^2 dv_g
\]       
Hence, the Euler-Lagrangian equation differs by lower order linear terms,
\[
\Delta^2 J=Q(J, \nabla J, \nabla^2 J, \nabla^3 J)+L(J, \nabla J, \Delta J),
\]                                      
where $L$ takes of the form
\[
L(J, \Delta J)=\Delta J*Rm+\nabla Rm*\nabla J+a(Rm, \nabla Rm, \Delta Rm) *J.
\]
Regularity theory in \cite{HJ19} applies directly to get smoothness of weakly $W^{2, 2}$-biharmonic almost symplectic structure. Moreover, arguments as in Theorem \ref{thm-existence} hold also for $\tilde \cE_2(\omega)$, to get an energy-minimizing biharmonic almost symplectic structure. And hence such objects form a compact set in smooth topology, parallel to biharmonic almost complex structures. Moreover, since the principle operator is the same, we know that the index of its linearized operator is zero. Hence in general we expect that the moduli (of energy-minimizing biharmonic almost symplectic structures) consists of finitely many points. We summarize the discussions above as follows,

\begin{thm}A $W^{2, 2}$-biharmonic almost symplectic structure on any compact almost Hermitian manifold $(M^4, g)$ is smooth. There always exist minimizers of the energy functional $\tilde \cE_2(\omega)$ which are smooth biharmonic almost symplectic structures on any compact almost Hermitian manifold $(M^4, g)$ and all such energy-minimizers form a compact set. 
\end{thm}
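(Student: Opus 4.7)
The plan is to reduce the theorem to the corresponding results for biharmonic almost complex structures (Theorem \ref{thm-existence} together with the regularity portion of Theorem \ref{main1}), using the bijective correspondence $\omega(\cdot,\cdot) = g(J\cdot,\cdot)$ between $g$-compatible almost symplectic forms and $J \in \cJ_g$. First I would rewrite the energy in terms of $J$. The Weitzenbock identity gives $\Delta_d \omega = \Delta \omega + \mathrm{Rm}\ast \omega$, so expanding the square yields
\[
\tilde \cE_2(\omega) \;=\; \cE_2(J) \;+\; 2\int_M (\Delta J,\, \mathrm{Rm}\ast J)\, dv_g \;+\; \int_M |\mathrm{Rm}\ast J|^2\, dv_g .
\]
On a compact $(M^4,g)$ the curvature is smooth and $|J|_g$ is uniformly bounded on $\cJ_g$, so the last two terms are uniformly bounded and continuous under $W^{1,2}$-convergence (they involve at most first derivatives of $J$ with smooth bounded coefficients).

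For existence and compactness of minimizers I would run the direct method exactly as in Theorem \ref{thm-existence}. Any minimizing sequence $J_k$ has $\cE_2(J_k)$ bounded (because $\tilde \cE_2 - \cE_2$ is uniformly bounded in $J$), hence its $W^{2,2}$ norm is bounded. Extracting a subsequence with $J_k \rightharpoonup J_0$ weakly in $W^{2,2}$ and strongly in $W^{1,2}$, the pointwise constraints $J_0^2=-\mathrm{id}$ and $g$-compatibility pass to the limit, so $J_0 \in W^{2,2}(\cJ_g)$. Weak lower semicontinuity controls $\cE_2(J_0)$, and strong $W^{1,2}$ convergence passes the curvature cross-terms to the limit, so $J_0$ is a minimizer. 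The standard upgrade from weak to strong $W^{2,2}$ convergence then yields the compactness statement.

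For the Euler-Lagrange equation I would repeat the variational computation of Proposition \ref{weaksol} with the additional curvature contributions. The admissible variations are unchanged, as they are dictated only by the constraint \eqref{constraint}. The extra terms coming from $\int (\Delta J, \mathrm{Rm}\ast J)\, dv_g$ and $\int |\mathrm{Rm}\ast J|^2\, dv_g$ are linear in $J$, $\nabla J$, and $\Delta J$ with smooth bounded coefficients built from $\mathrm{Rm}$, $\nabla \mathrm{Rm}$, and $\Delta \mathrm{Rm}$, producing
\[
\Delta^2 J \;=\; Q(J,\nabla J,\nabla^2 J,\nabla^3 J) \;+\; L(J,\nabla J,\Delta J)
\]
with $L$ of the schematic form stated in the paragraph preceding the theorem. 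For smoothness, the point is that $L$ is strictly subcritical relative to the critical nonlinearity $Q$: it is linear in $\Delta J$ and $\nabla J$ with $C^\infty$ coefficients depending only on $(M,g)$. The general regularity theorem in the companion paper \cite{HJ19} is formulated precisely to handle fourth-order semilinear systems with critical nonlinearity, and subcritical linear perturbations do not disturb the $\epsilon$-regularity or the bootstrap hierarchy. Hence any $W^{2,2}$ weak biharmonic almost symplectic structure is smooth.

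The main obstacle is essentially bookkeeping rather than analysis: verifying that $L$ genuinely falls within the class of perturbations allowed by \cite{HJ19}, and checking that the variational derivation of the equation goes through when the variations interact with the curvature terms under only $W^{2,2}$ regularity of $J$. Since $L$ is linear in the highest-order quantities with smooth coefficients, no new technique beyond the one used for $\cE_2(J)$ is required, and the argument for Theorem \ref{thm-existence} transfers almost verbatim.
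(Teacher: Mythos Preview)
Your proposal is correct and follows essentially the same approach as the paper: rewrite $\tilde\cE_2(\omega)$ in terms of $J$ via the Weitzenb\"ock formula, observe that it differs from $\cE_2(J)$ only by lower-order (linear) curvature terms, and then invoke the existence/compactness argument of Theorem~\ref{thm-existence} together with the regularity results of \cite{HJ19} for the perturbed Euler--Lagrange system. One small imprecision: the cross-term $\int_M(\Delta J,\mathrm{Rm}\ast J)\,dv$ does involve second derivatives of $J$, so its continuity along the minimizing sequence uses weak $W^{2,2}$ convergence of $\Delta J_k$ paired with strong $L^2$ convergence of $J_k$ (or an integration by parts), not merely strong $W^{1,2}$ convergence as you stated---but this does not affect the argument.
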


We can also consider the energy functional $\tilde E_2(\omega)$ for almost complex structures in a fixed homotopy class, to discuss biharmonic almost symplectic structures in a fixed homotopy class (of almost complex structures). Similar results as in Theorem \ref{main2} and Theorem \ref{main3} hold with only slight modifications. We believe these results shall have applications in symplectic geometry and we shall discuss these objects elsewhere.  

\subsection{Moduli and a conjecture}

A biharmonic almost complex structure with zero energy defines a K\"ahler structure and this makes 
a K\"ahler structure  a very special example of energy-minimizing biharmonic almost complex structures. A K\"ahler structure imposes a very strong topology restrictions on the topology of the underlying four manifold (indeed there are rather satisfactory classification of complex complex surfaces). Unlike a K\"ahler structure, an energy-minimizing biharmonic almost structures always exists on a compact four manifold which supports an almost complex structure.  

We regard the Riemannian metric as a parameter and consider the moduli space of (energy-minimizing) biharmonic almost complex structure.  
The moduli of biharmonic almost complex structures should have important applications for the study of smooth topology of almost complex four manifolds.                                        
For a general theory of moduli problem, see \cite[Section 4.2]{DK}. We briefly discuss a conjecture without diving into the technical terminology.

We have known that energy-minimizing biharmonic almost complex structures form a compact set (in smooth topology). In particular, there are at most finitely many homotopy classes which can have an absolute energy-minimizer. 
One can easily see that the linearized operator is bi-Laplacian operator plus lower order term, which is  a \emph{Fredholm} operator. In particular, the principle operator is self-adjoint and hence the index of the linearized operator is zero. 
Hence for a ``generic" metric, there are finitely many energy-minimizers and they form a smooth manifold. 
On the other hand the uniqueness fails. For example, let $g$ be a Calabi-Yau metric on a $K3$ surface, which supports a hyperk\"ahler structure. For such a metric $g$, it has a $S^2$ family of compatible complex structures 
and $g$ is K\"ahler with respect to all these complex structures. 
Note that the failure of uniqueness makes the perturbation of biharmonic almost complex structures with varying metric a very tricky problem. 

A standard principle is then to consider the transversality. Suppose $(g, J)$ defines a compatible almost Hermitian structure. Denote $(h, K)$ to be the infinitesimal variation of $(g, J)$.
Then we have the following,
\begin{prop}The infinitesimal variation $(h, K)$ satisfies 
\begin{equation}\label{infi01}
JK+KJ=0;\; h(J\cdot, \cdot)+h(\cdot, J\cdot)+g(K\cdot, \cdot)+g(\cdot, K\cdot)=0.
\end{equation}
\end{prop}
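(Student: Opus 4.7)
The plan is to obtain both identities by differentiating the defining constraints of an almost Hermitian structure along a one-parameter family $(g_t, J_t)$ with $g_0 = g$, $J_0 = J$, $\dot g_0 = h$, $\dot J_0 = K$. This is the standard procedure: at each $t$ the pair $(g_t, J_t)$ satisfies the two pointwise constraints that define compatibility, so the linearized constraints are exactly the stated conditions on $(h, K)$.

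First I would differentiate the algebraic identity $J_t^2 = -\mathrm{id}$. Since this is a pointwise matrix equation that does not involve $g_t$, the derivative at $t=0$ gives $KJ + JK = 0$, which is the first equation. Next, for the second identity, the cleanest route is to use the equivalent form of compatibility
\[
g_t(J_t X, Y) + g_t(X, J_t Y) = 0,
\]
which is equivalent to $g_t(J_t \cdot, J_t \cdot) = g_t(\cdot, \cdot)$ once $J_t^2 = -\mathrm{id}$ is assumed (replace $Y$ by $-J_t Y$ to convert between the two forms). Differentiating this skew-symmetry condition at $t=0$ immediately produces
\[
h(JX, Y) + h(X, JY) + g(KX, Y) + g(X, KY) = 0,
\]
which is the second identity.

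If instead one prefers to start from the form $g_t(J_t \cdot, J_t \cdot) = g_t(\cdot, \cdot)$, one obtains the equivalent statement $h(JX, JY) + g(KX, JY) + g(JX, KY) = h(X, Y)$, and one recovers the stated form by substituting $Y \mapsto JY$ and using both $J^2 = -\mathrm{id}$ and $JK + KJ = 0$ (i.e.\ the first identity together with the skew-symmetry of $J$ with respect to $g$). Either way the two identities follow in a single line each.

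There is no real obstacle here: the proposition is just the linearization of the two pointwise constraints $J^2 = -\mathrm{id}$ and $g(J\cdot, J\cdot) = g(\cdot, \cdot)$ defining $\cJ_g$ as a subset of the bundle of endomorphisms paired with the space of metrics. The only mildly subtle point is bookkeeping: one must choose the form of the compatibility condition that makes both $h$ and $K$ appear linearly and symmetrically, which is precisely the skew-symmetry form $g(J\cdot, \cdot) + g(\cdot, J\cdot) = 0$ rather than the quadratic-in-$J$ form.
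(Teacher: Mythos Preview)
Your proof is correct and follows exactly the approach indicated in the paper: differentiate the constraints $J^2=-\mathrm{id}$ and $g(J\cdot,\cdot)+g(\cdot,J\cdot)=0$ along a one-parameter family to obtain the two linearized identities. The paper's own proof simply says this is straightforward from these two restrictions, which is precisely what you have carried out.
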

\begin{proof}This is straightforward by the restrictions $J^2=-\text{id}$ and $g(J\cdot, \cdot)+g(\cdot, J\cdot)=0$. 
\end{proof}

We consider the map 
\[
B(g, J)=\left[J,\,  \Delta^2_g J\right]
\]
Then $J$ is a biharmonic almost complex structure with respect to the metric $g$ if $(g, J)$ is a zero point of $B$. We expect the following transversality result holds
\begin{conj}\label{conj01}
The map $B$ is transverse to the zero section and hence the zero set of $B$ is regular. 
\end{conj}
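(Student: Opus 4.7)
The plan is to establish transversality by analyzing the linearization $DB$ at an arbitrary zero $(g_0,J_0)$ of $B$, reducing the question to ruling out a nontrivial cokernel element $\tau$ by a combination of an algebraic computation and a unique-continuation argument. First set the problem in a Banach-manifold framework: fix a smooth reference metric $g_\ast$, choose a large integer $k$ and $\varepsilon\in(0,1)$, let $\cH$ be the separable Banach space of $C^{k,\varepsilon}$ symmetric $2$-tensors, and let $\cG=\{g_\ast+h : h\in\cH,\; g_\ast+h>0\}$ denote the corresponding space of metrics. Assemble these into a Banach fibration $\cP\to\cG$ with fiber $W^{2,2}(\cJ_g)$, and consider the Banach bundle $\cV\to\cP$ whose fiber at $(g,J)$ consists of $L^2$ sections $S$ of $T^\ast M\otimes TM$ satisfying $JS+SJ=0$ and $g(S\cdot,\cdot)+g(\cdot,S\cdot)=0$. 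Then $B$ is a smooth section of $\cV$, and the conjecture asks that the vertical component of $DB(g_0,J_0)$ be surjective at every $(g_0,J_0)\in B^{-1}(0)$.

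Linearizing at $(g_0,J_0)$ in an admissible pair $(h,K)$ constrained by \eqref{infi01} gives a splitting
\begin{equation*}
DB(g_0,J_0)(h,K) \;=\; L_{J_0} K \;+\; \cM(h),
\end{equation*}
in which $L_{J_0}$ is the linearization of the biharmonic almost complex structure equation at $J_0$, a fourth-order elliptic operator of Fredholm index zero on admissible $K$ (as noted in the text), and $\cM(h)$ is the third-order linear differential operator in $h$ obtained by differentiating the Christoffel symbols and the metric contractions entering $[J_0,\Delta_g^2 J_0]$. Since $L_{J_0}$ is Fredholm, its cokernel in $L^2$ is finite dimensional, and transversality reduces to showing that no nonzero $\tau\in\mathrm{Coker}(L_{J_0})$ can be $L^2$-orthogonal to the image $\cM(\cH)$.

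The main obstacle is this last step. Given such a $\tau$, one has $L_{J_0}^\ast\tau=0$ on $M$ together with the distributional identity $\langle \cM(h),\tau\rangle_{L^2}=0$ for every compactly supported $h\in\cH$. Integrating by parts produces a pointwise algebraic relation of the form $\mathcal{A}(J_0,\nabla J_0,\nabla^2 J_0,\nabla^3 J_0)(\tau)=0$, where $\mathcal{A}$ is a contraction determined by the $3$-jet of $J_0$. The proposal is to show that on any open set where $\nabla J_0$ does not vanish identically, this contraction together with the algebraic constraints on $\tau$ (namely $J_0\tau+\tau J_0=0$ and $g_0$-skewness) forces $\tau$ to vanish on that open set; then strong unique continuation for the fourth-order self-adjoint elliptic operator $L_{J_0}^\ast$ (via Aronszajn's theorem in the analytic case, or Carleman estimates for fourth-order elliptic systems in the smooth case) propagates the vanishing to all of $M$. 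The K\"ahler degenerate locus $\nabla J_0\equiv 0$ must be treated separately using K\"ahler identities, since there $L_{J_0}$ decouples along the type decomposition and its cokernel admits an explicit description. The hard part will be the pointwise linear-algebra argument identifying $\cM$ as a sufficiently nondegenerate perturbation: because $\cM$ is constrained by the same algebraic relations as $B$ itself, one must verify that its image still spans the relevant finite-dimensional quotient at some point of $M$.

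Granted universal transversality, $B^{-1}(0)$ is a smooth Banach submanifold of $\cP$ and the projection $\pi:B^{-1}(0)\to\cG$ is a Fredholm map of index zero (the index of $L_{J_0}$). Sard-Smale then supplies a residual set of metrics $g$ for which $\pi^{-1}(g)=\{J : B(g,J)=0\}$ is a smooth zero-dimensional submanifold of $W^{2,2}(\cJ_g)$; combined with the compactness of energy-minimizing biharmonic almost complex structures given by Theorem \ref{main1}, this yields finiteness of the moduli for a generic metric, the geometric consequence envisaged in the paper.
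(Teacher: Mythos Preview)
The statement you are attempting to prove is labeled a \emph{conjecture} in the paper; there is no proof to compare against. The author explicitly writes that ``the main technical obstacle is that the linearized operator with respect to the linearization of the metric $g$ is an extremely complicated fourth order operator'' and defers the matter to future work. So your proposal is not a rederivation of something established---it is an outline of an attack on an open problem.

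As an outline, your framework is the standard one (universal moduli space, Fredholm linearization in $J$, kill the cokernel using metric variations, then Sard--Smale), and the reduction to showing that no nonzero $\tau\in\mathrm{Coker}(L_{J_0})$ is $L^2$-orthogonal to $\cM(\cH)$ is correct. But the proposal does not actually close this step: you state that integrating $\langle \cM(h),\tau\rangle=0$ by parts yields a pointwise relation $\mathcal{A}(J_0,\nabla J_0,\nabla^2 J_0,\nabla^3 J_0)(\tau)=0$, and then assert that ``on any open set where $\nabla J_0$ does not vanish identically, this contraction\ldots forces $\tau$ to vanish.'' That is precisely the content of the conjecture, and you have not supplied the linear algebra. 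The operator $\cM$ is third order in $h$ but its coefficients depend on up to three derivatives of $J_0$ in a highly constrained way (both $B$ and its image live in the $J_0$-antilinear, $g_0$-skew subbundle), and there is no a priori reason the resulting algebraic map is surjective at a point; in known transversality arguments of this type (e.g.\ for pseudoholomorphic curves or anti-self-dual connections) the analogous pointwise surjectivity requires a genuine computation exploiting the specific structure of the equation. You also invoke unique continuation for the fourth-order system $L_{J_0}^\ast\tau=0$, which is not covered by Aronszajn's theorem and would itself require Carleman estimates adapted to this operator. Finally, the K\"ahler locus $\nabla J_0\equiv 0$ is not a side case: there the cokernel of $L_{J_0}$ contains the tangent space to the hyperk\"ahler $S^2$ in the $K3$ example the paper discusses, and you would need to exhibit metric variations that move $J_0$ off that sphere, which is a separate (and nontrivial) calculation.

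In short: the architecture is reasonable, but the two load-bearing steps---the pointwise nondegeneracy of $\cM$ and unique continuation for $L_{J_0}^\ast$---are asserted rather than proved, and these are exactly the difficulties the paper identifies as unresolved.
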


Suppose this were the case, then a rather standard discussion of moduli applies. In particular, then for a generic metric, the homotopy classes of an energy-minimizing biharmonic almost complex structure does not depend on the background metric, and hence gives a diffeomorphic invariant. Since for a generic K\"ahler metric $g$, there exists essentially one compatible complex structure (which defines a K\"ahler structure together with the metric), one might expect that such a homotopy class on an almost complex four manifold which contains an energy-minimizing biharmonic almost complex structures is unique. 

\begin{conj}\label{conj02}Given a compact (simply-connected, nonspin) four dimensional almost complex manifold $M$, the homotopy class of energy-minimizing biharmonic almost complex structures is a differential invariant of the underlying almost complex manifold and it is unique (up to the sign).  
\end{conj}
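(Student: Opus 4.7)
The strategy is to bootstrap from Conjecture~\ref{conj01}. Granting that $B(g,J)=[J,\Delta_g^2 J]$ is transverse to the zero section, the universal moduli space
\[
\mathcal{P}=\{(g,J) : g\in \text{Met}(M),\ J\in W^{2,2}(\cJ_g),\ B(g,J)=0\}
\]
is a smooth Banach manifold, and the projection $\pi\colon\mathcal{P}\to \text{Met}(M)$ is Fredholm of index zero, since the linearization of $B$ in $J$ has principal symbol $\Delta^2$ acting on variations $K$ constrained by \eqref{infi01}, which is formally self-adjoint. Restricting to the subset $\mathcal{P}_{\min}\subset \mathcal{P}$ of absolute energy-minimizers, properness of $\pi|_{\mathcal{P}_{\min}}$ follows from the compactness statement in Theorem~\ref{main1}.

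The plan for the invariance claim is a Sard--Smale argument applied to $\pi|_{\mathcal{P}_{\min}}$. This produces a residual set $\text{Met}^{\mathrm{reg}}(M)\subset \text{Met}(M)$ of regular metrics for which the fiber is a finite discrete set of smooth energy-minimizers. Since $\text{Met}(M)$ is a connected convex cone, any two regular metrics $g_0,g_1$ are joined by a smooth path $(g_t)$ transverse to the wall stratification; along such a path, properness forbids escape and transversality limits bifurcations, so that $\pi^{-1}(g_0)$ and $\pi^{-1}(g_1)$ are cobordant inside $\mathcal{P}_{\min}$. Combined with $C^0$-stability of the homotopy type of an almost complex structure under small $W^{2,2}$ perturbations (see the argument behind Lemma~\ref{1stchern-01} and \cite{White89}), this shows that the set of realized homotopy classes is independent of the regular metric, hence a differential invariant of $M$.

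For the uniqueness up to sign, my plan is to exploit a canonical reference. If $M$ admits a K\"ahler metric $g_0$ then $\pm J_0$ are absolute minimizers with $\cE_2=0$, and for a dense set of nearby regular metrics any minimizer is $W^{2,2}$-close to $\pm J_0$, so the realized homotopy class is $[\pm J_0]$. For general almost complex $M$ without a K\"ahler representative, I would replace $g_0$ by an almost-K\"ahler reference structure furnished by Donaldson's Lefschetz pencil construction, carry out the same local-uniqueness argument in a neighborhood of this reference, and then use the invariance established in the previous paragraph to transport the conclusion throughout $\text{Met}^{\mathrm{reg}}(M)$.

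The main obstacle is Conjecture~\ref{conj01} itself: surjectivity of the vertical differential of $B$ requires a unique-continuation/cokernel analysis for a fourth-order tensor elliptic system constrained by \eqref{infi01}, and kernel contributions from automorphisms of $(M,g,J)$ must be perturbed away. A second and probably deeper obstacle is the uniqueness step on genuinely non-K\"ahler almost complex four manifolds, where no canonical geometric reference is available; there one may be forced to bring in gauge-theoretic topological input (Seiberg--Witten or Bauer--Furuta invariants distinguishing the two spin candidates $\sigma,p(\sigma)$ of Theorem~\ref{main2}) to anchor the argument.
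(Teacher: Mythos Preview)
The paper offers no proof of this statement: it is explicitly a \emph{conjecture}, placed in the Appendix as a proposed direction, with the author writing that ``the further study [shall be explored] elsewhere'' and identifying the transversality Conjecture~\ref{conj01} as the main technical obstacle. So there is nothing to compare your proposal against; your text is a heuristic programme, not a proof, and the paper does not claim otherwise.

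That said, your programme has a structural gap beyond the acknowledged dependence on Conjecture~\ref{conj01}. Sard--Smale and the cobordism argument operate on the zero set of $B$, i.e.\ on \emph{all} biharmonic almost complex structures, whereas the conjecture concerns \emph{absolute energy-minimizers}. The subset $\mathcal{P}_{\min}\subset\mathcal{P}$ is not cut out by a Fredholm equation; it is defined by a global inequality $\cE_2(J)\le \cE_2(J')$ for all competitors $J'$, and there is no reason it should be a submanifold or even open in $\mathcal{P}$. Along a generic path $g_t$ the critical set may indeed form a 1-dimensional cobordism, but the minimum of $\cE_2$ can transfer from one branch of that cobordism to another at a value of $t$ where two distinct critical values cross. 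At such a crossing the realized homotopy class of the minimizer can jump, and nothing in your transversality package prevents this. To make the invariance step go through you would need an additional Morse-theoretic or wall-crossing argument controlling how the minimum energy level interacts with the branch structure; this is exactly the kind of difficulty the paper flags but does not resolve.

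Your uniqueness sketch is also too optimistic. Donaldson's Lefschetz pencils produce symplectic, not almost-K\"ahler, structures with any prescribed smallness of $\cE_2$, and even in the K\"ahler case the claim that nearby regular metrics have minimizers $W^{2,2}$-close to $\pm J_0$ presupposes a local uniqueness statement (no other low-energy critical points) that you have not justified. The paper's own discussion is more cautious here, noting the hyperk\"ahler $K3$ example where uniqueness genuinely fails for a specific metric.
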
 

If the conjecture were true,  it would imply that, for a given almost complex four manifold, there exists an \emph{a priori} homotopy class which supports an energy-minimizing biharmonic almost complex structures for a generic metric; in particular it would imply that the first Chern class of a simply-connected  K\"ahler surface is a differential invariant.  The above discussion is rather superfluous and we believe the further study of biharmonic almost complex structure in dimension four should lead to many applications. One main technical difficulty is certainly the transversality Conjecture \ref{conj01}. To prove such a result, one needs to prove that the linearized operator of $B$ is a surjective operator.  The main technical obstacle is that the linearized operator with respect to the linearization of the metric $g$ is an extremely complicated fourth order operator.
Due to its technical complexity, we shall explore the further study elsewhere.


\begin{thebibliography}{s}

\bibitem{CWY} Chang, Sun-Yung A.; Wang, Lihe; Yang, Paul C. \emph{A regularity theory of biharmonic maps.} Comm. Pure Appl. Math. 52 (1999), no. 9, 1113-1137. 


\bibitem{DK} Donaldson, S. K. ; Kronheimer, P. B. \emph{The geometry of four-manifolds}, Oxford Science Publications.

\bibitem{Donaldson90}Donaldson, S. K.  \emph{ Polynomial invariants for smooth four-manifolds.  } Topology (29), No.3, 257-315 (1990).


\bibitem{EL}  Eells, J.; Lemaire, Luc.  \emph{Selected Topics in Harmonic Maps}, Selected topics in harmonic maps. CBMS Regional Conference Series in Mathematics, 50. 

\bibitem{GT}Gilbarg, D.; Trudinger, N. \emph{Elliptic partial differential equations of second order}, Springer. 

\bibitem{He17}He, Weiyong. \emph{Energy-minimizing harmonic almost complex structures}, preprint 2017. 

\bibitem{HJ19}He, Weiyong; Jiang, Ruiqi. \emph{Polyharmonic Almost Complex Structures}, preprint 2019. 

\bibitem{HJL}He, Weiyong; Jiang, Ruiqi; Lin, Longzhi.  \emph{Existence of polyharmonic maps in critical dimensions}, preprint 2019. 

\bibitem{JGY}Jiang, Guoying. \emph{2-harmonic maps and their first and second variational formulas.} (Chinese) An English summary appears in Chinese Ann. Math. Ser. B 7 (1986), no. 4, 523. Chinese Ann. Math. Ser. A 7 (1986), no. 4, 389-402.

\bibitem{Lin99}Lin, F. H. \emph{Gradient estimates and blow-up analysis for stationary harmonic maps.} Ann. of Math. (2) 149 (1999), no. 3, 785-829.

\bibitem{Lin01}Lin, F. H. \emph{Mapping problems, fundamental groups and defect measures.} Acta Math. Sin. (Engl. Ser.) 15 (1999), no. 1, 25-52.

\bibitem{KMT}Kirby, R. ;  Melvin, P.; Teichner, P.  \emph{Cohomotopy sets of 4-manifolds}, Geometry and Topology Monographs 18 (2012), 161-190. 
\bibitem{Luck} Luckhaus, S.\emph{Partial H\"older continuity for minima of certain energies among maps into a Riemannian manifold.} Indiana Univ. Math. J. 37 (1988), no. 2, 349-367. 

\bibitem{MO} Montaldo, S.;  Oniciuc, S. \emph{A short survey on biharmonic maps between Riemannian manifolds},  Rev. Un. Mat. Argentina 47 (2006), no. 2, 1-22 (2007).

\bibitem{P1}Teichner, P. \emph{mathoverflow.net/questions/59820/a-question-on-classification-of-almost-complex-structures-on-4-manifolds}.

\bibitem{SU1} Sacks, J.; Uhlenbeck, K.\emph{ The existence of minimal immersions of $2$-spheres.} Ann. of Math. (2) 113 (1981), no. 1, 1-24.

\bibitem{SU}Schoen, R.; Uhlenbeck, K. \emph{A regularity theory for harmonic maps},  J. Differential Geom. 17 (1982), no. 2, 307-335. 


\bibitem{Simon}Simon, L. \emph{Theorems on regularity and singularity of energy minimizing maps},  Lectures in Mathematics ETH Zurich. Birkh\"auser Verlag, Basel, 1996.


\bibitem{Wang}Wang, Changyou. \emph{Biharmonic maps from $\mathbb{R}^4$ into a Riemannian manifold.} Math. Z. 247 (2004), no. 1, 65-87.

\bibitem{Wang1}Wang, Changyou. \emph{ Remarks on biharmonic maps into spheres.} Calc. Var. Partial Differential Equations 21 (2004), no. 3, 221-242. 

\bibitem{White89}White, Brian. \emph{Infima of energy functionals in homotopy classes of mappings.} J. Differential Geom. 23 (1986), no. 2, 127-142. 

\bibitem{Wood95}Wood, C.  \emph{Harmonic almost complex structures},  Compositio Math. 99 (1995),  no. 2, 183-212.

\end{thebibliography}
\end{document}